\documentclass[letterpaper, 10pt, conference]{ieeeconf}

\IEEEoverridecommandlockouts
\usepackage{cite}
\usepackage{amsmath,amssymb,amsfonts}
\usepackage{textcomp}
\usepackage{graphicx}
\usepackage{mathtools}
\usepackage[hidelinks]{hyperref}

\newtheorem{theorem}{Theorem}[section]

\newtheorem{proposition}[theorem]{Proposition}
\newtheorem{corollary}[theorem]{Corollary}

\newcommand{\R}{\mathbb{R}}
\newcommand{\T}{\mathbb{T}}
\newcommand{\Pcal}{\mathcal{P}}
\newcommand{\F}{\mathcal{F}}
\newcommand{\Ent}{\operatorname{Ent}}
\newcommand{\potential}[1][\mu]{\dot{H}^1_{#1}}
\newcommand{\rieszmap}[1][\mu]{\mathcal{I}_{#1}}
\newcommand{\dotHk}[1][k]{\dot{H}^{#1}}
\newcommand{\ind}{\mathbf{1}}

\begin{document}

\title{\LARGE \bf Control Strategies for Multi-Species Wasserstein Gradient Flows}

\author{Dante Kalise,
        Lucas M. Moschen,
        and Grigorios A. Pavliotis
\thanks{Manuscript received on September 10th, 2026.
D. Kalise is supported by the Air Force Office of Scientific Research under award number FA8655-26-1-B011.
L. Moschen is funded by the Roth Scholarship at Imperial College London with travel funds from ICL-CNRS.
G. Pavliotis is partially supported by an ERC-EPSRC Frontier Research Guarantee through Grant No. EP/X038645, ERC Advanced Grant No. 247031 and a Leverhulme Trust Senior Research Fellowship, SRF\textbackslash{}R1\textbackslash{}241055.}%
\thanks{D. Kalise, L. M. Moschen, and G. A. Pavliotis are with the Department of Mathematics, Imperial College London, London, UK (e-mail: \{d.kalise-balza, lmm122, pav\}@ic.ac.uk).}%
}

\maketitle
\thispagestyle{empty}

\begin{abstract}
    We stabilize stationary states of coupled multi-species McKean--Vlasov equations by feedback control, extending a spectral approach based on the Wasserstein Hessian and a Riccati equation from a single density to a product of Wasserstein spaces.
    We give necessary and sufficient conditions for local exponential stabilization at a prescribed rate when only one species is actuated, together with the minimum number of control inputs required.
    For two species, the condition reduces to a rank condition on the Fourier coefficients of the cross-interaction kernel over the eigenspaces of the unactuated species' linearized dynamics.
    A two-community noisy Kuramoto model and a cell-sorting model illustrate the results.
\end{abstract}

\section{Introduction}
\label{sec:introduction}

Coupled multi-species McKean--Vlasov systems describe populations whose densities evolve through diffusion and interactions within and across species, and arise in models of neuronal synchronization~\cite{rohling2020two, Meylahn2020} and the spatial organization of interacting species~\cite{GiuntaHillenLewisPotts2024, CarrilloSalmaniw2025}.
The well-posedness, stationary states, and long-time behavior of multi-species McKean--Vlasov equations, along with their derivation as mean-field limits, are studied in~\cite{duong2025multi} and the references therein.
Related multi-species Wasserstein dynamics, including examples with cross-diffusion and nonlocal interactions, are considered in~\cite{CongerHoffmanMazumdarRatliff2025}, while aggregation-diffusion systems have been investigated regarding their stability, bifurcation structure, and pattern formation in~\cite{GiuntaHillenLewisPotts2024, CarrilloSalmaniw2025}.

In this paper, we address the control of such mean-field multi-species systems via feedback control and Wasserstein gradient flows. 
Mean-field control and game formulations for systems with several populations are studied in~\cite{BensoussanHuangLauriere2018}, while~\cite{AlbiKalise2018} considers feedback control acting on one population and influencing another through their interaction in a leader-follower setting. 
However, to the best of our knowledge, local exponential stabilization of multi-species Wasserstein gradient flows under single-species actuation has not been characterized. In~\cite{KaliseMoschenPavliotis2026}, we established local exponential stabilization of Wasserstein gradient flows via feedback control for the case of a single density.
In that work, the feedback construction combined the spectral properties of the Wasserstein Hessian of the underlying free energy with an algebraic Riccati equation.
Here, we extend this approach to coupled multi-species McKean--Vlasov equations.

When the interactions satisfy a reciprocity condition, the coupled system is a Wasserstein gradient flow on a product of Wasserstein spaces, and the Hessian-based control design from \cite{KaliseMoschenPavliotis2026} extends to this setting.
Beyond this extension, the multi-species structure raises the question of whether cross-interactions can transmit the effect of a control acting on only one species to the remaining species.
This mechanism of selective actuation arises, for example, in the suppression of pathological neural synchronization, where stimulation may be applied to one interacting neuronal population while the activity of another is monitored~\cite{TukhlinaRosenblum2008}.
In this work, we characterize when such single-species control can stabilize the full system and determine the minimum number of control inputs required for stabilization.
The rest of the paper is organized as follows.
Section~\ref{sec:multispecies} introduces the multi-species free energy and realizes the associated Wasserstein Hessian as a self-adjoint operator on a product space.
Section~\ref{sec:hessian-feedback} develops the feedback design under full actuation and characterizes $\delta$-stabilization through a single species.
Section~\ref{sec:numerics} applies these results in two examples.
Section~\ref{sec:conclusion} concludes.

\paragraph*{Notation}
We write $\T^d \coloneqq (\R/\mathbb{Z})^d$ and $\Pcal(\T^d)$ for the space of Borel probability measures on $\T^d$.
We identify functions differing by constants and set $\dotHk[k](\T^d) \coloneqq H^k(\T^d)/\R$.
For a positive density $w$, $\dotHk[1]_w(\T^d)$ is endowed with the inner product $\langle\phi, \psi\rangle_{\dotHk[1]_w} \coloneqq \int_{\T^d} \nabla\phi \cdot \nabla\psi \, w \, dx$, and we identify each equivalence class with its representative of zero $w$-mean.

\section{Multi-Species Wasserstein Dynamics}
\label{sec:multispecies}

Consider $N \ge 2$ interacting species on $\T^d$.
For each species $i$, let $\sigma_i > 0$ be its diffusion strength, $V_i \in C^2(\T^d)$ its confinement potential, and $W_{ij} \in C^2(\T^d)$ the interaction kernel with $j$.
For $\mu = (\mu_1, \ldots, \mu_N) \in \Pcal(\T^d)^N$, set
\begin{equation*}
    \begin{aligned}
        \F(\mu_1, \ldots, \mu_N) &\coloneqq \sum_{i=1}^N \sigma_i \Ent(\mu_i) + \sum_{i=1}^N \int_{\T^d} V_i \, d\mu_i  \\
        &+ \frac{1}{2} \sum_{i,j=1}^N \iint_{\T^d \times \T^d} W_{ij}(x-y) \, d\mu_i(x) d\mu_j(y).
    \end{aligned}
\end{equation*}
Here, $\Ent(\mu) \coloneqq \int_{\T^d} \mu\log\mu\,dx$ if $\mu \ll dx$ and $\Ent(\mu) \coloneqq +\infty$ otherwise, where an absolutely continuous measure and its density are denoted by the same symbol.
We assume throughout the {\em reciprocity condition} $W_{ji}(z) = W_{ij}(-z)$.
The first variation of $\F$ with respect to $\mu_i$ is
\begin{equation}
    \label{eq:first_variation_ms}
    \frac{\delta\F}{\delta\mu_i}(\mu) = \sigma_i(\log\mu_i + 1) + V_i + \sum_{j=1}^N W_{ij} \ast \mu_j.
\end{equation}
The corresponding Wasserstein gradient flow is
\begin{equation}
    \label{eq:multispecies_pde}
    \partial_t\mu_i = \nabla \cdot \left(\mu_i \nabla\frac{\delta\F}{\delta\mu_i}(\mu)\right), \qquad \mu_i \in \Pcal(\T^d)
\end{equation}
for $i=1, \ldots, N$.
We first record the existence and the regularity of the stationary states of~\eqref{eq:multispecies_pde}, which are needed to define the spaces on which the Wasserstein Hessian acts.

\begin{proposition}
    \label{prop:stationary_states}
    The free energy $\F$ admits a minimizer, which is a stationary state of~\eqref{eq:multispecies_pde}.
        Moreover, a vector of probability densities $\bar\mu$ is stationary if and only if
        \begin{equation}
            \label{eq:kirkwood_monroe}
            \bar\mu_i \propto \exp\left[-\frac{1}{\sigma_i} \left(V_i + \sum_{j=1}^N W_{ij} \ast \bar\mu_j\right)\right],
        \end{equation}
        for each $i = 1, \ldots, N$.
        Every stationary state satisfies $\bar\mu_i \in C^2(\T^d)$ and $c_i \leq \bar\mu_i \leq C_i$ for constants $c_i, C_i > 0$.
\end{proposition}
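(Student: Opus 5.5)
Existence of a minimizer follows from the direct method on $\Pcal(\T^d)^N$ with the product weak topology. Since $\T^d$ is compact, $\Pcal(\T^d)^N$ is weakly compact. Each $\Ent$ is weakly lower semicontinuous and bounded below by $0$ (Jensen on the unit-volume torus). The potential terms $\int V_i\,d\mu_i$ are weakly continuous because $V_i \in C(\T^d)$. Each interaction term $\iint W_{ij}(x-y)\,d\mu_i(x)\,d\mu_j(y)$ is weakly continuous, since $W_{ij}$ is continuous and bounded and weak convergence of $\mu_i^n,\mu_j^n$ gives weak convergence of $\mu_i^n\otimes\mu_j^n$. Hence $\F$ is lower semicontinuous, bounded below by $-\sum_i\|V_i\|_\infty - \tfrac12\sum_{ij}\|W_{ij}\|_\infty$, and finite at the uniform measures, so a minimizer $\bar\mu$ with $\Ent(\bar\mu_i)<\infty$ exists.

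To see that the minimizer is stationary, I derive the Euler--Lagrange equations. Fix $i$, a bounded perturbation $\phi$ with $\int\phi\,d\bar\mu_i = 0$, and consider variations $\bar\mu_i \mapsto \bar\mu_i(1+\varepsilon\phi)$. The reciprocity condition $W_{ji}(z)=W_{ij}(-z)$ makes the two cross terms containing $\mu_i$ coincide, which removes the factor $\tfrac12$ and gives exactly the first variation \eqref{eq:first_variation_ms}. Minimality then forces $\delta\F/\delta\mu_i(\bar\mu)$ to be constant $\bar\mu_i$-a.e.

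Showing that $\bar\mu_i$ has full support is the main technical obstacle. I would handle it with a mass-transfer argument: suppose $\bar\mu_i$ vanishes on a set $A$ of positive measure, and move mass $\varepsilon$ uniformly onto $A$. The entropy changes at rate $\varepsilon\log\varepsilon \to -\infty$ relative to $\varepsilon$, while all other terms change by $O(\varepsilon)$. So $\F$ strictly decreases for small $\varepsilon$, contradicting minimality. With full support, $\sigma_i\log\bar\mu_i + V_i + \sum_j W_{ij}\ast\bar\mu_j$ is constant a.e., which is \eqref{eq:kirkwood_monroe}. This is a stationary point of \eqref{eq:multispecies_pde}, because the flux $\bar\mu_i\nabla(\delta\F/\delta\mu_i)$ vanishes.

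For the characterization, the ``if'' direction is the same computation: \eqref{eq:kirkwood_monroe} makes $\delta\F/\delta\mu_i$ constant, so each flux vanishes. For ``only if'', I take stationarity in the weak sense with $\bar\mu_i$ a probability density for which the equation makes sense. Testing $\nabla\cdot(\bar\mu_i\nabla\frac{\delta\F}{\delta\mu_i})=0$ against $\frac{\delta\F}{\delta\mu_i}$ gives
\[
\int \bar\mu_i\Big|\nabla \tfrac{\delta\F}{\delta\mu_i}\Big|^2dx=0 .
\]
Alternatively, I can write the flux as $\sigma_i\nabla\bar\mu_i+\bar\mu_i\nabla U_i$ with $U_i:=V_i+\sum_jW_{ij}\ast\bar\mu_j\in C^2$. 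Then $g_i=\bar\mu_i e^{U_i/\sigma_i}$ solves the uniformly elliptic equation $\nabla\cdot(e^{-U_i/\sigma_i}\nabla g_i)=0$ on the compact torus, and integrating $g_i$ against itself shows $g_i$ is constant. This gives \eqref{eq:kirkwood_monroe} directly, including positivity, and I would use this second route to avoid dividing by $\bar\mu_i$.

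Regularity and bounds then follow from \eqref{eq:kirkwood_monroe}. Since $W_{ij}\in C^2$ and $\bar\mu_j$ is a probability measure, $W_{ij}\ast\bar\mu_j\in C^2$ with norm at most $\|W_{ij}\|_{C^2}$. So the exponent is $C^2$ and bounded, and $\bar\mu_i\in C^2(\T^d)$. Normalizing, and writing $U_i$ for the exponent, gives $\bar\mu_i\ge e^{-\operatorname{osc}(U_i)/\sigma_i}=:c_i>0$ and $\bar\mu_i\le e^{\operatorname{osc}(U_i)/\sigma_i}=:C_i$. Here $\operatorname{osc}(U_i)\le 2\|V_i\|_\infty+2\sum_j\|W_{ij}\|_\infty$, uniformly over all stationary states.
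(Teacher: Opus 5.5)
Your existence argument matches the paper's. Your treatment of the minimizer is more self-contained than the paper's, which simply appeals to the gradient-flow structure. You derive the first-order condition and use the $\varepsilon\log\varepsilon$ mass-transfer argument to get $\bar\mu_i>0$ a.e. This gives \eqref{eq:kirkwood_monroe} for the minimizer directly, so its stationarity follows from the easy direction. The ``if'' direction and the final regularity and uniform bounds are also correct.

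The gap is in the ``only if'' direction. Your $g_i=\bar\mu_i e^{U_i/\sigma_i}$ is, up to a normalizing constant, the paper's $f_i=\bar\mu_i/v_i$ with $v_i\propto e^{-U_i/\sigma_i}$. You then conclude that $g_i$ is constant by testing $\nabla\cdot(e^{-U_i/\sigma_i}\nabla g_i)=0$ with $g_i$ itself, i.e.\ via $\int_{\T^d} e^{-U_i/\sigma_i}|\nabla g_i|^2\,dx=0$. This needs $g_i\in H^1(\T^d)$, both for the integral to be defined and for $g_i$ to be an admissible test function. A stationary probability density is a priori only in $L^1$, so the stationarity equation holds only in the sense of distributions. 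That sense is available for every $\bar\mu_i\in L^1$, since $U_i\in C^2$, so ``for which the equation makes sense'' does not supply the missing regularity. The same objection applies, even more strongly, to your first route of testing with $\delta\F/\delta\mu_i$. As written, your argument only characterizes stationary states under an unstated assumption such as $\bar\mu_i\in H^1$. That weakens the proposition's claim that \emph{every} stationary density is $C^2$ and bounded away from zero.

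The missing ingredient is a duality argument for very weak solutions, which is what the paper uses. For each zero-mean $\eta\in C^\infty(\T^d)$, solve the adjoint problem $\nabla\cdot(v_i\nabla\phi)=\eta$ by Lax--Milgram and elliptic regularity, obtaining $\phi\in C^2(\T^d)$, which is a legitimate test function. Then $\int f_i\eta\,dx=\int f_i\nabla\cdot(v_i\nabla\phi)\,dx=0$. Since $\eta$ is arbitrary, $f_i$ is constant, and this uses only $f_i\in L^1$. Alternatively, you could first bootstrap $g_i$ into $H^1$ by elliptic regularity for distributional solutions, but some such step must be supplied before your energy identity is justified.
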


\begin{proof}
    The space $\Pcal(\T^d)^N$ is compact under any product $\operatorname{W}_1$ metric by~\cite[Prop.~7.1.5 and Thm.~5.1.3]{AmbrosioGigliSavare2008}. 
    Moreover, $\F$ is proper and lower semicontinuous because $V_i$ and $W_{ij}$ are continuous, while the entropy is lower semicontinuous by~\cite[Lem.~9.4.3]{AmbrosioGigliSavare2008}. 
    Consequently, $\F$ admits a minimizer $\mu_\star \in \Pcal(\T^d)^N$.
    Since $\F(\mu_\star) < +\infty$, the components of $\mu_\star$ are absolutely continuous.
    As~\eqref{eq:multispecies_pde} is the Wasserstein gradient flow of $\F$, $\mu_\star$ is a stationary state.
    For $\bar\mu \in \Pcal(\T^d)^N$, set $U_i[\bar\mu] \coloneqq V_i + \sum_{j=1}^N W_{ij} \ast \bar\mu_j$. 
    If $\bar\mu$ satisfies~\eqref{eq:kirkwood_monroe}, then $\sigma_i \nabla\bar\mu_i + \bar\mu_i \nabla U_i[\bar\mu] = 0$.
    Hence, $\sigma_i \Delta\bar\mu_i + \nabla \cdot (\bar\mu_i \nabla U_i[\bar\mu]) = 0$, and $\bar\mu$ is stationary.
    Conversely, assume that $\bar\mu$ is stationary. 
    Fix $i$ and let $Z_i \coloneqq \int_{\T^d} e^{-U_i[\bar\mu]/\sigma_i} \, dx$ and $v_i \coloneqq Z_i^{-1}e^{-U_i[\bar\mu]/\sigma_i}$. 
    Then $v_i \in \Pcal(\T^d) \cap C^2(\T^d)$, there exist constants $c_i, C_i > 0$ such that $c_i \le v_i \le C_i$, and $\sigma_i\Delta v_i + \nabla \cdot (v_i\nabla U_i[\bar\mu]) = 0$.
    Writing $f_i \coloneqq \bar\mu_i/v_i \in L^1(\T^d)$ gives $\nabla \cdot (v_i \nabla f_i) = 0$ in the distributional sense. 
    For every $\eta \in C^\infty(\T^d)$ with zero mean, the Lax--Milgram lemma on $\dotHk[1](\T^d)$, an adaptation of~\cite[Thm.~8.8]{gilbarg2001elliptic} to the flat torus, and elliptic bootstrap, yield $\phi \in C^2(\T^d)$, unique up to constants, satisfying $\nabla \cdot (v_i \nabla\phi) = \eta$.
    Then $0 = \int_{\T^d} f_i \nabla \cdot (v_i \nabla\phi) \, dx = \int_{\T^d} f_i\eta \, dx$, and since this holds for every $\eta \in C^\infty(\T^d)$ with zero mean, $f_i$ is constant almost everywhere. 
    As $\bar\mu_i, v_i \in \Pcal(\T^d)$, $f_i \equiv 1$, so $\bar\mu_i = v_i$ and the stated regularity and bounds follow.
\end{proof}

Fix a stationary state $\bar\mu = (\bar\mu_1, \ldots, \bar\mu_N)$ of~\eqref{eq:multispecies_pde}.
By Prop.~\ref{prop:stationary_states}, each $\bar\mu_i \in C^2(\T^d)$ is positive, so we may set $X_i \coloneqq \potential[\bar\mu_i]$ and endow $X_{\rm ms} \coloneqq \prod_{i=1}^N X_i$ with
\[
\langle \phi, \psi \rangle_{X_{\rm ms}} \coloneqq \sum_{i=1}^N \langle \phi_i, \psi_i \rangle_{\potential[\bar\mu_i]}.
\]
For $\phi, \psi \in X_{\rm ms}$, set $D_{ij} \phi(x,y) \!\coloneqq\! \nabla\phi_i(x) -\! \nabla\phi_j(y)$ and define
\begin{align} 
    &H^{\rm ms}(\phi,\psi) = \sum_{i=1}^N \int_{\T^d} \nabla^2 V_i \nabla\phi_i \cdot \nabla\psi_i \, d\bar\mu_i \label{eq:multispecies_HE} \\ 
    &+ \frac{1}{2}\sum_{i,j=1}^N \iint_{(\T^d)^2} \nabla^2 W_{ij}(x-y) D_{ij}\phi \cdot D_{ij}\psi \, d\bar\mu_i(x) d\bar\mu_j(y) \notag. 
\end{align}

\begin{proposition}
    \label{prop:multispecies_hessian_bound}
    For every stationary state $\bar\mu$, the bilinear form $H^{\rm ms}$ defined in~\eqref{eq:multispecies_HE} represents the Wasserstein Hessian of the non-entropic part of $\F$ at $\bar\mu$ and is bounded on $X_{\rm ms}$.
\end{proposition}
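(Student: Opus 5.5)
The argument has two parts. First we identify $H^{\rm ms}$ with the second derivative of $\mathcal{E}(\mu) \coloneqq \sum_i \int V_i\,d\mu_i + \frac12\sum_{i,j}\iint W_{ij}(x-y)\,d\mu_i d\mu_j$ along Wasserstein geodesics through $\bar\mu$. Then we bound each term using only $\|\nabla^2 V_i\|_\infty$, $\|\nabla^2 W_{ij}\|_\infty$ and the mass of $\bar\mu_i$.

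\textbf{Hessian identification.} For $\phi \in X_{\rm ms}$, first take smooth representatives $\phi_i \in C^\infty(\T^d)$. Consider the product curve $\mu^t_i \coloneqq (\mathrm{id} + t\nabla\phi_i)_\#\bar\mu_i$. For $|t|$ small this is a constant-speed geodesic in each factor, because $\frac{|x|^2}{2} + t\phi_i$ is convex on the lifted cover; hence it is a geodesic in the product metric $\mathrm{W}_2^2 = \sum_i \mathrm{W}_2^2(\mu_i,\nu_i)$. We then compute
\[
\mathcal{E}(\mu^t) = \sum_i \int V_i(x + t\nabla\phi_i(x))\,d\bar\mu_i + \frac12\sum_{i,j}\iint W_{ij}\bigl(x-y + t D_{ij}\phi(x,y)\bigr)\,d\bar\mu_i(x)\,d\bar\mu_j(y).
\]
Since $V_i, W_{ij} \in C^2$ and the domain is compact, we may differentiate twice under the integral by dominated convergence. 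This gives $\frac{d^2}{dt^2}\mathcal{E}(\mu^t)|_{t=0} = H^{\rm ms}(\phi,\phi)$. The mixed form $H^{\rm ms}(\phi,\psi)$ then follows by polarization, and the form is symmetric thanks to the reciprocity condition. Indeed, $W_{ji}(z) = W_{ij}(-z)$ gives $\nabla^2 W_{ji}(y-x) = \nabla^2 W_{ij}(x-y)$ and $D_{ji}\phi(y,x) = -D_{ij}\phi(x,y)$, so the $(i,j)$ and $(j,i)$ terms coincide; this also confirms that the factor $\frac12$ is consistent with the first variation~\eqref{eq:first_variation_ms}. This is exactly the single-species computation of~\cite{KaliseMoschenPavliotis2026}, carried out componentwise.

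\textbf{Boundedness.} We use $|D_{ij}\phi\cdot D_{ij}\psi| \le (|\nabla\phi_i(x)| + |\nabla\phi_j(y)|)(|\nabla\psi_i(x)| + |\nabla\psi_j(y)|)$ and expand. Each product of the form $|\nabla\phi_i(x)||\nabla\psi_j(y)|$ integrates against $d\bar\mu_i\,d\bar\mu_j$ to $\|\nabla\phi_i\|_{L^1(\bar\mu_i)}\|\nabla\psi_j\|_{L^1(\bar\mu_j)}$. By Cauchy--Schwarz with $\bar\mu_j(\T^d) = 1$, this is at most $\|\phi_i\|_{\potential[\bar\mu_i]}\|\psi_j\|_{\potential[\bar\mu_j]}$. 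Diagonal products such as $|\nabla\phi_i(x)||\nabla\psi_i(x)|$ are bounded by Cauchy--Schwarz in $L^2(\bar\mu_i)$. The $V_i$ terms are handled the same way. Altogether,
\[
|H^{\rm ms}(\phi,\psi)| \le C\,\|\phi\|_{X_{\rm ms}}\|\psi\|_{X_{\rm ms}}, \qquad C \lesssim \max_i\|\nabla^2 V_i\|_\infty + 2N\max_{i,j}\|\nabla^2 W_{ij}\|_\infty .
\]

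\textbf{Extension by density.} Smooth functions are dense in each $\potential[\bar\mu_i]$, because the bounds $c_i \le \bar\mu_i \le C_i$ from Prop.~\ref{prop:stationary_states} make this norm equivalent to the $\dot H^1$ norm. The bound above therefore lets $H^{\rm ms}$ extend continuously to all of $X_{\rm ms}$, and it still represents the Hessian on this closure.

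The main obstacle is making ``represents the Wasserstein Hessian'' precise on the whole space rather than on smooth $\phi$. Geodesics of the form $(\mathrm{id} + t\nabla\phi)_\#\bar\mu$ are only available for $\phi$ with bounded Hessian. The plan is therefore to define the Hessian on the dense smooth class via the second derivative along these geodesics, and then take the bounded extension as the representation. This is the same convention as in the single-species setting, and boundedness is exactly what justifies it.
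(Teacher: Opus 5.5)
Your proof is correct and follows the paper's route. The paper likewise obtains $H^{\rm ms}$ from the single-species computation along $(\mathrm{id}+t\nabla\phi_i)_\#\bar\mu_i$ applied componentwise, which it cites from~\cite{KaliseMoschenPavliotis2026} and you write out, and it likewise proves boundedness by Cauchy--Schwarz with $\|\nabla^2 V_i\|_{L^\infty}$, $\|\nabla^2 W_{ij}\|_{L^\infty}$ and $\bar\mu_i(\T^d)=1$; your term-by-term expansion even gives a constant linear in $N$ rather than the paper's $N^2$.

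One small correction: symmetry of $H^{\rm ms}$ in $(\phi,\psi)$, which is what polarization needs, holds because $\nabla^2 W_{ij}$ is a symmetric matrix, not because of reciprocity. Reciprocity instead makes the $(i,j)$ and $(j,i)$ terms coincide and makes~\eqref{eq:first_variation_ms} the first variation of $\F$.
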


\begin{proof}
    The confinement and self-interaction terms follow from~\cite[Ex.~2.4]{KaliseMoschenPavliotis2026}.
    For each cross-interaction pair $(i,j)$, the same computation on the product space yields the differences $\nabla\phi_i(x) - \nabla\phi_j(y)$ and $\nabla\psi_i(x) - \nabla\psi_j(y)$.
    Summing over the ordered pairs with the pre-factor $1/2$ gives~\eqref{eq:multispecies_HE}.
    By Cauchy--Schwarz, the confinement terms are bounded by $\max_i \|\nabla^2 V_i\|_{L^\infty}\|\phi\|_{X_{\rm ms}} \|\psi\|_{X_{\rm ms}}$.
    Moreover, $\|D_{ij} \phi\|_{L^2(\bar\mu_i \otimes \bar\mu_j)} \le \sqrt{2}\|\phi\|_{X_{\rm ms}}$, and analogously for $\psi$.
    Cauchy--Schwarz therefore bounds the interaction contribution by $N^2 \max_{i,j} \|\nabla^2 W_{ij}\|_{L^\infty}\|\phi\|_{X_{\rm ms}}\|\psi\|_{X_{\rm ms}}$.
\end{proof}

Consequently, Assumptions (H1) and (H2) from~\cite{KaliseMoschenPavliotis2026}, required for the Wasserstein Hessian realization, are satisfied at every stationary state $\bar\mu$.
The analogue of Assumption~(H3) in~\cite{KaliseMoschenPavliotis2026} is also directly verified in the multi-species setting because the derivative of the first variation of the non-entropic part of $\F$ is given by $(\mathcal{K}^{\rm ms} \nu)_i = \sum_{j=1}^N W_{ij} \ast \nu_j$.

By~\cite[Prop.~3.1]{KaliseMoschenPavliotis2026} applied componentwise and Prop.~\ref{prop:multispecies_hessian_bound}, the Wasserstein Hessian of $\F$ at $\bar\mu$ is represented on $X_{\rm ms}$ by
\begin{equation} 
    \label{eq:ams_form} 
    a_{\mathrm{ms}}(\phi,\psi) \coloneqq H^{\rm ms}(\phi,\psi) + \sum_{i=1}^N \sigma_i \int_{\T^d} \nabla^2\phi_i : \nabla^2\psi_i \, d\bar\mu_i, 
\end{equation}
with form domain $D(a_{\mathrm{ms}}) = [\dotHk[2](\T^d)]^N$. 
The entropy contribution is closed and coercive, while Prop.~\ref{prop:multispecies_hessian_bound} shows that $H^{\rm ms}$ is bounded on $X_{\rm ms}$. 
Thus, the arguments of~\cite[Thms.~3.3--3.4]{KaliseMoschenPavliotis2026} extend to the finite product, and $a_{\mathrm{ms}}$ is densely defined, closed, symmetric, and lower bounded.
Furthermore, the associated self-adjoint operator $A_{\rm ms} : D(A_{\rm ms}) \subset X_{\rm ms} \to X_{\rm ms}$ has compact resolvent and is characterized by $a_{\rm ms}(\phi,\psi) = \langle A_{\rm ms}\phi,\psi\rangle_{X_{\rm ms}}$ for $\phi \in D(A_{\rm ms})$ and $\psi \in D(a_{\rm ms})$.

\section{Feedback Control Strategies}
\label{sec:hessian-feedback}

Fix a stationary state $\bar\mu$ and a target decay rate $\delta > 0$.
Following~\cite{KaliseMoschenPavliotis2026}, we seek to locally stabilize $\bar\mu$ with an exponential rate greater than $\delta$. 
Given shape control functions $\alpha_1^{\rm ms}, \ldots, \alpha_m^{\rm ms} \in X_{\rm ms}$, define $B_{\rm ms} u \coloneqq \sum_{j=1}^m u_j \alpha_j^{\rm ms}$  for $u \in \R^m$.
The controlled system reads, for $i=1,\ldots,N$,
\begin{equation}
    \label{eq:controlled_multispecies}
    \partial_t \mu_i = \nabla \cdot \left[\mu_i \nabla \left( \frac{\delta\F}{\delta\mu_i}(\mu) + (B_{\rm ms} u(t))_i \right)\right].
\end{equation}
Let $\rieszmap[\bar\mu_i] \phi \coloneqq -\nabla \cdot (\bar\mu_i \nabla\phi)$ and $\rieszmap[\bar\mu] \coloneqq \operatorname{diag}(\rieszmap[\bar\mu_1], \ldots, \rieszmap[\bar\mu_N])$.
The operator $\rieszmap[\bar\mu_i]$ is an isomorphism from $X_i$ to $X_i'$.
The linearized generator of~\eqref{eq:multispecies_pde} satisfies $L_{\rm ms} = -\rieszmap[\bar\mu] A_{\rm ms} \rieszmap[\bar\mu]^{-1}$.
Thus, setting $\nu \coloneqq \mu - \bar\mu$ and $\xi \coloneqq \rieszmap[\bar\mu]^{-1} \nu \in X_{\rm ms}$ and linearizing~\eqref{eq:controlled_multispecies} at $(\bar\mu,0)$ gives, after conjugation, $\partial_t \xi = -A_{\rm ms}\xi - B_{\rm ms}u(t)$.

\subsection{Controlling all species}

Following the strategy from~\cite{KaliseMoschenPavliotis2026}, let $(e_k^{\rm ms})_{k \ge 1}$ be an $X_{\rm ms}$-orthonormal basis of eigenfunctions of $A_{\rm ms}$, ordered so that $\lambda_1^{\rm ms} \le \lambda_2^{\rm ms} \le \cdots$ and $\lambda_k^{\rm ms} \longrightarrow +\infty$.
Define
\[
\Lambda_\delta^{\rm ms} \coloneqq \sigma(A_{\rm ms}) \cap (-\infty,\delta], \; X_\delta^{\rm ms} \coloneqq \operatorname{span}\{e_k^{\rm ms} : \lambda_k^{\rm ms} \in \Lambda_\delta^{\rm ms}\},
\]
where $\sigma(A_{\rm ms})$ is the spectrum of $A_{\rm ms}$.
Assume $\Lambda_\delta^{\rm ms}$ is not empty and choose $m = \dim X_\delta^{\rm ms}$ and take $\{\alpha_j^{\rm ms}\}_{j=1}^m$ as an orthonormal basis of $X_\delta^{\rm ms}$.
Then $B_{\rm ms} B_{\rm ms}^\ast$ is the $X_{\rm ms}$-orthogonal projection onto $X_\delta^{\rm ms}$.

Having fixed the spatial controls, we determine the time-dependent amplitudes $u_j(\cdot)$ through the product-space analogue of the infinite-horizon linear-quadratic problem in~\cite[Sec.~4.2]{KaliseMoschenPavliotis2026}.
Let $Q_{\rm ms} : X_{\rm ms} \to X_{\rm ms}$ be bounded, self-adjoint, nonnegative, diagonal in $(e_k^{\rm ms})_{k \ge 1}$, and positive on $X_\delta^{\rm ms}$. 
The associated algebraic Riccati equation is
\begin{multline}
    \label{eq:ARE-product}
    -\langle(A_{\rm ms}-\delta I)\phi,\Pi_{\rm ms}\psi\rangle_{X_{\rm ms}}
    -\langle\Pi_{\rm ms}\phi,(A_{\rm ms}-\delta I)\psi\rangle_{X_{\rm ms}}\\
    -\langle B_{\rm ms}^\ast\Pi_{\rm ms}\phi,B_{\rm ms}^\ast\Pi_{\rm ms}\psi\rangle_{\R^m}
    +\langle Q_{\rm ms}\phi,\psi\rangle_{X_{\rm ms}}=0,
\end{multline}
for all $\phi, \psi \in D(A_{\rm ms})$. 
For nonlinear stabilization, we assume the product-space analogue of Assumption~(H6) in~\cite{KaliseMoschenPavliotis2026}, which follows by adapting the maximal-regularity and remainder estimates of~\cite[Sec.~3.4]{KaliseMoschenPavliotis2025} to the finite product system.

\begin{proposition}
    \label{prop:full_modal_feedback}
    Under the above assumptions,~\eqref{eq:ARE-product} admits a unique stabilizing nonnegative self-adjoint solution $\Pi_{\rm ms}$, which is diagonal in the eigenbasis $(e_k^{\rm ms})_{k \ge 1}$.
    The feedback control $u(t) = B_{\rm ms}^\ast\Pi_{\rm ms}\xi(t)$ is optimal for the corresponding linear-quadratic problem and yields the closed-loop operator $A_{{\rm ms},\Pi} \coloneqq A_{\rm ms} + B_{\rm ms}B_{\rm ms}^\ast \Pi_{\rm ms}$, which satisfies $\lambda_\Pi \coloneqq \min\sigma(A_{{\rm ms},\Pi}) > \delta$.
    Moreover, the nonlinear closed-loop system is locally exponentially stable at $\bar\mu$ with every decay rate $\gamma \in (0, \lambda_\Pi)$.
\end{proposition}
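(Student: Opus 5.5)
The plan is to reduce the Riccati equation~\eqref{eq:ARE-product} to a family of scalar equations by diagonalizing everything in the eigenbasis $(e_k^{\rm ms})_{k\ge1}$. Then we check that the resulting closed-loop operator has spectrum above $\delta$, and finally transfer the linear result to the nonlinear system through the product-space version of~(H6), exactly as in~\cite[Sec.~4.2]{KaliseMoschenPavliotis2026}.

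\emph{Step 1: candidate solution.} Since $\{\alpha_j^{\rm ms}\}$ is an orthonormal basis of $X_\delta^{\rm ms}$, the operator $B_{\rm ms}B_{\rm ms}^\ast = P_\delta$ is the orthogonal projection onto $X_\delta^{\rm ms}$. It commutes with $A_{\rm ms}$, and $X_\delta^{\rm ms}$ is finite dimensional because $A_{\rm ms}$ has compact resolvent. Write $Q_{\rm ms}e_k^{\rm ms} = q_k e_k^{\rm ms}$, with $q_k\ge0$ for all $k$ and $q_k>0$ whenever $\lambda_k^{\rm ms}\le\delta$. We take the ansatz $\Pi_{\rm ms}e_k^{\rm ms}=\pi_k e_k^{\rm ms}$ and test~\eqref{eq:ARE-product} with $\phi=e_k^{\rm ms}$, $\psi=e_l^{\rm ms}$. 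For $k\ne l$ every term vanishes. For $k=l$ we obtain the scalar equation
\[
-2(\lambda_k^{\rm ms}-\delta)\pi_k - b_k\pi_k^2 + q_k = 0,
\]
where $b_k=1$ if $\lambda_k^{\rm ms}\le\delta$ and $b_k=0$ otherwise.
\begin{itemize}
\item If $\lambda_k^{\rm ms}\le\delta$, we take the nonnegative root $\pi_k = -(\lambda_k^{\rm ms}-\delta)+\sqrt{(\lambda_k^{\rm ms}-\delta)^2+q_k}$, which is positive since $q_k>0$.
\item If $\lambda_k^{\rm ms}>\delta$, the equation forces $\pi_k = q_k/(2(\lambda_k^{\rm ms}-\delta))$.
\end{itemize}
Because $\lambda_k^{\rm ms}\to\infty$ and $Q_{\rm ms}$ is bounded, $\sup_k\pi_k<\infty$. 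Hence $\Pi_{\rm ms}$ is bounded, self-adjoint and nonnegative, and by density of $D(A_{\rm ms})$ together with the diagonal structure it solves~\eqref{eq:ARE-product}.

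\emph{Step 2: closed-loop spectrum.} The closed-loop operator is diagonal, $A_{{\rm ms},\Pi}e_k^{\rm ms} = (\lambda_k^{\rm ms}+b_k\pi_k)e_k^{\rm ms}$.
\begin{itemize}
\item On $X_\delta^{\rm ms}$ the eigenvalue is $\delta+\sqrt{(\lambda_k^{\rm ms}-\delta)^2+q_k}>\delta$.
\item Off $X_\delta^{\rm ms}$ the eigenvalue is $\lambda_k^{\rm ms}>\delta$.
\end{itemize}
Since only finitely many $k$ fall in the first case, and the eigenvalues in the second case accumulate only at $+\infty$, the minimum $\lambda_\Pi$ is attained and is strictly greater than $\delta$. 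So $\Pi_{\rm ms}$ is stabilizing for the shifted system $-(A_{\rm ms}-\delta I)$.

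\emph{Step 3: uniqueness and optimality.} The pair $(-(A_{\rm ms}-\delta I),B_{\rm ms})$ is stabilizable, because the unstable modes are exactly the range of $B_{\rm ms}$ and $B_{\rm ms}$ acts as the identity there. The pair is also detectable via $Q_{\rm ms}^{1/2}$, since $Q_{\rm ms}$ is positive on $X_\delta^{\rm ms}$. We therefore invoke the standard infinite-dimensional LQ theory for self-adjoint generators with bounded control operators, as used in~\cite[Sec.~4.2]{KaliseMoschenPavliotis2026}. It gives a unique nonnegative self-adjoint stabilizing solution and the optimality of $u=B_{\rm ms}^\ast\Pi_{\rm ms}\xi$ for the $\delta$-shifted cost. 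Uniqueness then forces this solution to coincide with the diagonal one from Step~1.

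\emph{Step 4: nonlinear stability.} This is the main obstacle. Write the conjugated nonlinear system as $\partial_t\xi = -A_{{\rm ms},\Pi}\xi + \mathcal{N}(\xi)$. Here $\mathcal{N}$ collects the quadratic terms coming from $\nabla\cdot(\nu_i\nabla(\cdot))$, the entropy remainder, and the nonlinear part of the feedback transport. The product analogue of~(H6) supplies a maximal-regularity estimate for the analytic semigroup generated by $-A_{{\rm ms},\Pi}$, which is a self-adjoint operator with compact resolvent perturbed by a bounded operator. It also supplies a quadratic remainder bound $\|\mathcal{N}(\xi)\|\lesssim\|\xi\|^2$ in the relevant norms, for $\xi$ near $0$.

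With these two ingredients, a fixed-point argument in the weighted space $e^{\gamma t}\xi\in L^2$ with maximal regularity, for any $\gamma<\lambda_\Pi$, gives local existence and exponential decay, as in~\cite{KaliseMoschenPavliotis2026}. The work here is to check that the componentwise remainder estimates of~\cite[Sec.~3.4]{KaliseMoschenPavliotis2025} add up over the finite product. The cross terms $W_{ij}\ast\nu_j$ are smooth convolutions and only contribute benign bilinear bounds. The lower bounds $c_i\le\bar\mu_i$ from Prop.~\ref{prop:stationary_states} guarantee that the weighted norms in $X_i$ are equivalent to the unweighted ones.
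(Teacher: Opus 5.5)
Your proposal is correct and follows essentially the same route as the paper: stabilizability and detectability from the fact that $\operatorname{Ran} B_{\rm ms} = X_\delta^{\rm ms}$ and that $Q_{\rm ms}$ is positive there, standard Riccati theory for existence, uniqueness and optimality, the diagonal structure of $\Pi_{\rm ms}$ with the resulting spectral gap $\lambda_\Pi > \delta$, and the nonlinear result from the assumed product-space analogue of (H6). The only difference is one of order and explicitness. You build the diagonal solution first via the scalar equations $-2(\lambda_k^{\rm ms}-\delta)\pi_k - b_k\pi_k^2 + q_k = 0$, which give closed-loop eigenvalues $\delta+\sqrt{(\lambda_k^{\rm ms}-\delta)^2+q_k}$ on $X_\delta^{\rm ms}$ and $\lambda_k^{\rm ms}$ off it, and then identify this solution through uniqueness. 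The paper instead obtains existence and uniqueness first and then invokes the diagonalization argument of \cite[Prop.~4.4]{KaliseMoschenPavliotis2026}.
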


\begin{proof}
    Since $\{\alpha_j^{\rm ms}\}_{j=1}^m$ is an orthonormal basis of $X_\delta^{\rm ms}$, $B_{\rm ms}^\ast$ is injective on $X_\delta^{\rm ms}$.
    As in~\cite[Prop.~4.3]{KaliseMoschenPavliotis2026}, this injectivity and the positivity of $Q_{\rm ms}$ on $X_\delta^{\rm ms}$ give the Hautus stabilizability and detectability conditions.
    Additionally, the argument of~\cite[Prop.~4.4]{KaliseMoschenPavliotis2026} extends to $X_{\rm ms}$, showing that $\Pi_{\rm ms}$ is diagonal in $(e_k^{\rm ms})_{k \ge 1}$ with explicitly computable coefficients.
    The conclusions follow from the product-space versions of the spectral gap and nonlinear stabilization results in~\cite[Cor.~4.5 and Thm.~4.12]{KaliseMoschenPavliotis2026}.
\end{proof}

\subsection{Multi-species stabilization via single-species control}

Having established $\delta$-stabilization when all species are actuated, we now characterize when the same rate can be achieved by controlling a single species.
Fix $r \in \{1, \ldots, N\}$ and let $P_r : X_{\rm ms} \to X_r$ and $J_r : X_r \to X_{\rm ms}$ be the coordinate projection and canonical injection, respectively, so that $J_r^\ast = P_r$.
For $\alpha = (\alpha_1, \ldots, \alpha_m) \in X_r^m$, define
\[
B_{r,\alpha}u \coloneqq \sum_{j=1}^m u_j J_r \alpha_j, \quad u \in \R^m.
\]

For $\lambda \in \R$, set $E_\lambda \coloneqq \ker(A_{\rm ms} - \lambda I)$, $d_\lambda \coloneqq \dim E_\lambda$, and $m_{r,\delta} \coloneqq \max_{\lambda \in \Lambda_\delta^{\rm ms}} d_\lambda$.
We call $\alpha$ $\delta$-stabilizing through species $r$ if the pair $(-A_{\rm ms} + \delta I, -B_{r,\alpha})$ is stabilizable.
With this notation, the Hautus criterion reads $E_\lambda \cap \ker(B_{r,\alpha}^\ast) = \{0\}$ for every $\lambda \in \Lambda_\delta^{\rm ms}$.
Since $[B_{r,\alpha}^\ast e]_j = \langle P_r e, \alpha_j \rangle_{X_r}$, this is equivalent to requiring that, for every $\lambda \in \Lambda_\delta^{\rm ms}$, the only $e \in E_\lambda$ satisfying $\langle P_r e, \alpha_j \rangle_{X_r} = 0$ for all $j$ is $e = 0$.
In particular, if $E_\lambda \cap \ker(P_r) \neq \{0\}$ for some $\lambda \in \Lambda_\delta^{\rm ms}$, then no bounded control operator $B$ with $\operatorname{Ran}(B) \subset J_r X_r$ can achieve $\delta$-stabilization.
Indeed, if $e \in E_\lambda \cap \ker(P_r)$, then $Bu = J_r \alpha^{(u)}$ for some $\alpha^{(u)} \in X_r$ and $\langle B u, e \rangle_{X_{\rm ms}} = \langle \alpha^{(u)}, P_r e \rangle_{X_r} = 0$, so $B^\ast e = 0$ and the Hautus condition fails.

\begin{theorem}
    \label{thm:delta_stabilization_one_species}
    Assume $E_\lambda \cap \ker(P_r) = \{0\}$ for every $\lambda \in \Lambda_\delta^{\rm ms}$.
    If $m \geq m_{r,\delta}$, the $\delta$-stabilizing tuples form an open dense subset of $X_r^m$.
    If $m < m_{r,\delta}$, no tuple in $X_r^m$ is $\delta$-stabilizing.
\end{theorem}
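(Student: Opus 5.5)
The plan is to reduce the Hautus condition to a finite family of finite-dimensional linear-algebra conditions and then use the structure of full-rank matrices. Since $A_{\rm ms}$ has compact resolvent and is bounded below, $\Lambda_\delta^{\rm ms}$ is finite and each $E_\lambda$ with $\lambda \in \Lambda_\delta^{\rm ms}$ has finite dimension $d_\lambda$. For each such $\lambda$, fix an $X_{\rm ms}$-orthonormal basis $e^\lambda_1,\ldots,e^\lambda_{d_\lambda}$ of $E_\lambda$ and put $f^\lambda_k \coloneqq P_r e^\lambda_k \in X_r$. The hypothesis $E_\lambda \cap \ker(P_r) = \{0\}$ says exactly that $P_r|_{E_\lambda}$ is injective, so $f^\lambda_1,\ldots,f^\lambda_{d_\lambda}$ are linearly independent in $X_r$. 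By the reformulation of the Hautus criterion preceding the theorem, $\alpha \in X_r^m$ is $\delta$-stabilizing if and only if, for every $\lambda \in \Lambda_\delta^{\rm ms}$, the $d_\lambda \times m$ matrix
\[
M_\lambda(\alpha) \coloneqq \bigl(\langle f^\lambda_k, \alpha_j\rangle_{X_r}\bigr)_{k,j}
\]
has trivial left kernel. Indeed, $e = \sum_k c_k e^\lambda_k$ lies in $\ker B_{r,\alpha}^\ast$ iff $\sum_k c_k \langle f^\lambda_k,\alpha_j\rangle_{X_r} = 0$ for all $j$, i.e. iff $c^\top M_\lambda(\alpha) = 0$. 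Hence the condition is $\operatorname{rank} M_\lambda(\alpha) = d_\lambda$ for all $\lambda \in \Lambda_\delta^{\rm ms}$.

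The case $m < m_{r,\delta}$ is then immediate. Take $\lambda$ with $d_\lambda = m_{r,\delta} > m$; then $M_\lambda(\alpha)$ has rank at most $m < d_\lambda$, so some nonzero $c$ gives a nonzero eigenvector in $\ker B_{r,\alpha}^\ast$, and Hautus fails for every $\alpha$.

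For $m \ge m_{r,\delta}$, write $S_\lambda \coloneqq \{\alpha : \operatorname{rank} M_\lambda(\alpha) = d_\lambda\}$, so the stabilizing set is the finite intersection $\bigcap_{\lambda} S_\lambda$. It suffices to show each $S_\lambda$ is open and dense, since a finite intersection of open dense sets is open and dense.

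\emph{Openness.} The map $\alpha \mapsto M_\lambda(\alpha)$ is continuous and linear from $X_r^m$ to $\R^{d_\lambda\times m}$, and full row rank is an open condition (some $d_\lambda\times d_\lambda$ minor is nonzero).

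\emph{Density.} Let $F_\lambda \coloneqq \operatorname{span}\{f^\lambda_k\}$ and let $\Phi_\lambda : X_r^m \to \R^{d_\lambda\times m}$ be the linear map $\alpha\mapsto M_\lambda(\alpha)$. Because the $f^\lambda_k$ are independent, the Gram matrix $G$ is invertible, and $\Phi_\lambda$ is surjective: to realize a target matrix $T$, choose $\alpha_j = \sum_k (G^{-1}T)_{kj} f^\lambda_k$. Now fix an arbitrary $\alpha$. Since $m \ge d_\lambda$, full-rank matrices are dense in $\R^{d_\lambda\times m}$, and more precisely, for a fixed full-rank $T$, the polynomial $t\mapsto \det$ of a suitable $d_\lambda$-minor of $M_\lambda(\alpha) + tT$ is nonzero at large $t$. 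It is therefore not identically zero, so it is nonzero for all but finitely many $t$, in particular for arbitrarily small $t$. Taking $\beta$ with $\Phi_\lambda(\beta)=T$ gives $\alpha + t\beta \in S_\lambda$ for arbitrarily small $t$, which proves density.

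The only delicate point I anticipate is the surjectivity/perturbation step, which relies entirely on the linear independence of the $P_r e^\lambda_k$ supplied by the hypothesis. The rest is bookkeeping, namely the finiteness of $\Lambda_\delta^{\rm ms}$ and finite-dimensionality of the $E_\lambda$, both of which come from the compact resolvent of $A_{\rm ms}$.
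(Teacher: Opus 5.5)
Your proof is correct and follows the paper's argument. You use the same matrix $M_\lambda(\alpha)$ (transposed), the same rank reformulation relying on linear independence of the $P_r e_{\lambda,\ell}$, openness via a nonvanishing minor, the same rank bound for $m < m_{r,\delta}$, and the same one-variable polynomial fact for density. The only difference is in the density step: you perturb an arbitrary $\alpha$ directly along a direction $\beta$ with $M_\lambda(\beta)$ of full rank, obtained by surjectivity via $G^{-1}$, whereas the paper exhibits the single witness $\alpha_j = f_{\lambda,j}$ to get $\det(M_\lambda^\top M_\lambda) \not\equiv 0$ and then argues by contradiction that the zero set of this polynomial has empty interior.
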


\begin{proof}
    Fix $\lambda \in \Lambda_\delta^{\rm ms}$, let $(e_{\lambda,1}, \ldots, e_{\lambda,d_\lambda})$ be an orthonormal basis of $E_\lambda$, and set $f_{\lambda,\ell} \coloneqq P_r e_{\lambda,\ell}$ for $\ell = 1, \ldots, d_\lambda$.
    For $\alpha = (\alpha_1, \ldots, \alpha_m) \in X_r^m$, define
    \[
    M_\lambda(\alpha) \coloneqq \left(\langle \alpha_j, f_{\lambda,\ell} \rangle_{X_r}\right)_{1 \le j \le m, 1 \le \ell \le d_\lambda},
    \]
    so that $B_{r,\alpha}^\ast e = M_\lambda(\alpha) c$ if $e = \sum_{\ell=1}^{d_\lambda} c_\ell e_{\lambda,\ell}$. 
    Hence, the Hautus criterion is equivalent to $\operatorname{rank} M_\lambda(\alpha) = d_\lambda$ for every $\lambda \in \Lambda_\delta^{\rm ms}$.
    Moreover, $E_\lambda \cap \ker(P_r) = \{0\}$ implies that $f_{\lambda,1}, \ldots, f_{\lambda,d_\lambda}$ are linearly independent in $X_r$.

    Suppose that $m \ge m_{r,\delta}$.
    Then $m \ge d_\lambda$ for every $\lambda \in \Lambda_\delta^{\rm ms}$. 
    For each such $\lambda$, define $p_\lambda(\alpha) \coloneqq \det(M_\lambda(\alpha)^\top M_\lambda(\alpha))$ for $\alpha \in X_r^m$.
    Since the map $\alpha \mapsto M_\lambda(\alpha)$ is continuous and linear, $p_\lambda$ is a continuous polynomial, and $\operatorname{rank} M_\lambda(\alpha) = d_\lambda$ if and only if $p_\lambda(\alpha) > 0$.
    Hence, the subset of $X_r^m$ where $\operatorname{rank} M_\lambda(\alpha) = d_\lambda$ is open.
    To establish density, first note that choosing $\alpha_j = f_{\lambda,j}$ for $1 \le j \le d_\lambda$ and $\alpha_j = 0$ for $j > d_\lambda$ gives $p_\lambda(\alpha) = \det(G_\lambda^\top G_\lambda) > 0$, where $G_\lambda$ is the Gram matrix of $f_{\lambda,1}, \ldots, f_{\lambda,d_\lambda}$. 
    Thus, $p_\lambda$ is not identically zero.
    Define $Z_\lambda \coloneqq \{\alpha \in X_r^m : p_\lambda(\alpha) = 0\}$.
    If $Z_\lambda$ contained a ball $B(\alpha^0, \varepsilon)$ with $\varepsilon > 0$, then, for any $\beta \in X_r^m$, the polynomial $q_\beta(t) \coloneqq p_\lambda(\alpha^0 + t(\beta - \alpha^0))$ would vanish for all sufficiently small $t$, hence for every $t \in \R$ because it is a polynomial in $t$.
    Thus $p_\lambda(\beta) = q_\beta(1) = 0$ for every $\beta$, contradicting $p_\lambda \not \equiv 0$ and implying that $Z_\lambda$ has an empty interior, so $X_r^m \setminus Z_\lambda$ is dense.
    Since $\Lambda_\delta^{\rm ms}$ is finite, the intersection of these open dense sets with $\lambda \in \Lambda_\delta^{\rm ms}$ shows that $\delta$-stabilizing tuples form an open dense subset of $X_r^m$.
    
    Finally, if $m < m_{r,\delta}$, choose $\lambda \in \Lambda_\delta^{\rm ms}$ attaining $d_\lambda = m_{r,\delta}$.
    For every $\alpha \in X_r^m$, $\operatorname{rank} M_\lambda(\alpha) \le m < d_\lambda$, so the Hautus condition fails. 
    Therefore, no tuple in $X_r^m$ is $\delta$-stabilizing through species $r$.
\end{proof}

The theorem motivates the definition
\[
\delta_r^\star \coloneqq \min \{\lambda \in \R : E_\lambda \cap \ker(P_r) \neq \{0\}\},
\]
with the convention $\min \emptyset \coloneqq +\infty$.
It follows that stabilization by controls acting only on species $r$ is possible for every threshold $\delta < \delta_r^\star$ and impossible for every $\delta \ge \delta_r^\star$.

\begin{corollary}
    \label{cor:single_species_riccati}
    Fix $0 < \delta < \delta_r^\star$.
    For every $m \ge m_{r, \delta}$, there exists an open dense set $\mathcal{G}_{r, \delta}^m \subset X_r^m$ such that, for every $\alpha \in \mathcal{G}_{r, \delta}^m$, the algebraic Riccati equation~\eqref{eq:ARE-product}, with $B_{\rm ms}$ replaced by $B_{r,\alpha}$, admits a unique stabilizing nonnegative self-adjoint solution $\Pi_{r,\alpha}$.
    Moreover, for every $\alpha \in \mathcal{G}_{r,\delta}^m \cap [W^{2,\infty}(\T^d)]^m$, under the hypotheses of Prop.~\ref{prop:full_modal_feedback}, the feedback $u(t) = B_{r,\alpha}^\ast \Pi_{r,\alpha} \xi(t)$ locally exponentially stabilizes the system at $\bar\mu$ with decay rate $\delta$.
\end{corollary}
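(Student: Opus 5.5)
We take $\mathcal{G}_{r,\delta}^m$ to be exactly the set of $\delta$-stabilizing tuples from Theorem~\ref{thm:delta_stabilization_one_species}. Since $\delta < \delta_r^\star$ and $\Lambda_\delta^{\rm ms} \subset (-\infty,\delta]$, every $\lambda \in \Lambda_\delta^{\rm ms}$ satisfies $\lambda < \delta_r^\star$. By definition of $\delta_r^\star$, this gives $E_\lambda \cap \ker(P_r) = \{0\}$, so the hypothesis of the theorem holds. For $m \ge m_{r,\delta}$ the theorem then makes $\mathcal{G}_{r,\delta}^m$ open and dense in $X_r^m$. If $\Lambda_\delta^{\rm ms}$ is empty, then $A_{\rm ms} - \delta I$ is already coercive and every tuple qualifies trivially.

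For the Riccati part, fix $\alpha \in \mathcal{G}_{r,\delta}^m$. Membership in $\mathcal{G}_{r,\delta}^m$ is precisely the Hautus condition $E_\lambda \cap \ker(B_{r,\alpha}^\ast) = \{0\}$ for all $\lambda \in \Lambda_\delta^{\rm ms}$. Because $A_{\rm ms}$ is self-adjoint with compact resolvent, the unstable part of $-A_{\rm ms} + \delta I$ is the finite-dimensional space $X_\delta^{\rm ms}$, and on its complement the semigroup decays at a rate strictly larger than $\delta$. The Hautus condition therefore gives stabilizability of $(-A_{\rm ms}+\delta I, -B_{r,\alpha})$. Detectability comes from $Q_{\rm ms}$ being positive on $X_\delta^{\rm ms}$, exactly as in the proof of Prop.~\ref{prop:full_modal_feedback}. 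The bounded-control, self-adjoint LQ theory invoked through~\cite[Prop.~4.3]{KaliseMoschenPavliotis2026} then yields a unique stabilizing nonnegative self-adjoint solution $\Pi_{r,\alpha}$.

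One difference from the full-actuation case needs care. $\Pi_{r,\alpha}$ is no longer diagonal in $(e_k^{\rm ms})$, since $B_{r,\alpha}B_{r,\alpha}^\ast$ mixes modes. So the spectral-gap statement has to come from stabilizability itself rather than from explicit coefficients. The stabilizing property means that $-(A_{\rm ms}-\delta I) - B_{r,\alpha}B_{r,\alpha}^\ast\Pi_{r,\alpha}$ generates an exponentially stable semigroup. Equivalently, the closed-loop generator $-A_{\rm ms} - B_{r,\alpha}B_{r,\alpha}^\ast\Pi_{r,\alpha}$ decays at a rate strictly greater than $\delta$. Spectrum determined growth holds here because this generator is a bounded perturbation of an analytic generator with compact resolvent.

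For the nonlinear statement, we follow the scheme of~\cite[Thm.~4.12]{KaliseMoschenPavliotis2026} on the product space. Write the controlled equation~\eqref{eq:controlled_multispecies} in the variable $\xi$ as the linear closed loop plus a remainder. The remainder contains the terms $\nabla\cdot(\nu_r \nabla (B_{r,\alpha}u)_r)$, and we need them to be quadratic in a maximal-regularity norm. This is where $\alpha_j \in W^{2,\infty}(\T^d)$ enters: it makes $\Delta\alpha_j$ and $\nabla\alpha_j$ bounded multipliers. The assumed product version of (H6) then closes a fixed-point argument in a weighted maximal-regularity space, giving local exponential stability with rate $\delta$.

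I expect this remainder estimate for the nonlinear feedback term to be the main obstacle. It requires checking that the estimates of~\cite[Sec.~3.4]{KaliseMoschenPavliotis2025} survive when the control acts in only one component while the coupling acts through $W_{ij}$. I would first try to bound the cross terms using the smoothing of convolution with $W_{ij}\in C^2$, and handle the actuated species exactly as in the single-density case.
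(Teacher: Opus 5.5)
Your proposal is correct and matches the paper's proof step for step: you use $\delta<\delta_r^\star$ to verify the hypothesis of Theorem~\ref{thm:delta_stabilization_one_species} and take $\mathcal{G}_{r,\delta}^m$ to be the resulting open dense set of $\delta$-stabilizing tuples; you combine stabilizability with positivity of $Q_{\rm ms}$ on $X_\delta^{\rm ms}$ to get the unique stabilizing Riccati solution; and, like the paper, you defer the nonlinear step to extending the maximal-regularity and remainder estimates, with $W^{2,\infty}$ making $\nabla\alpha_j$ and $\Delta\alpha_j$ bounded multipliers. Your extra observations are correct and left implicit by the paper: $\Pi_{r,\alpha}$ is no longer diagonal, so the decay rate must come from the stabilizing property itself, and the case of empty $\Lambda_\delta^{\rm ms}$ is trivial.
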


\begin{proof}
    Since $\delta < \delta_r^\star$, we have $E_\lambda \cap \ker(P_r) = \{0\}$ for every $\lambda \in \Lambda_\delta^{\rm ms}$. 
    Theorem~\ref{thm:delta_stabilization_one_species} therefore shows that the $\delta$-stabilizing tuples form an open dense subset $\mathcal{G}_{r,\delta}^m$ of $X_r^m$.
    For every $\alpha \in \mathcal{G}_{r,\delta}^m$, stabilizability of $(-A_{\rm ms} + \delta I, -B_{r,\alpha})$ and positivity of $Q_{\rm ms}$ on $X_\delta^{\rm ms}$ give the stabilizability and detectability conditions required by the Riccati argument of Prop.~\ref{prop:full_modal_feedback}.
    Hence, the corresponding Riccati equation admits a unique stabilizing solution $\Pi_{r,\alpha}$ and the shifted closed-loop system is exponentially stable.
    If, in addition, $\alpha_j \in W^{2,\infty}(\T^d)$, the maximal-regularity argument of~\cite[Thm.~17]{KaliseMoschenPavliotis2025} and the feedback-remainder estimate extend to $B_{r,\alpha}$, and the nonlinear stabilization argument of Prop.~\ref{prop:full_modal_feedback} applies.
\end{proof}

\subsection{Two-species stabilization criterion}

Assume $N = 2$.
The linearized operator $L_{\rm ms}$ satisfies
\[
(L_{\rm ms}\nu)_i = -\rieszmap[\bar\mu_i] \left(\sigma_i \frac{\nu_i}{\bar\mu_i} + W_{i1} \ast \nu_1 + W_{i2} \ast \nu_2\right).
\]
Write $(L_{\rm ms}\nu)_i = \sum_{j=1}^2 L_{ij}\nu_j$ and, for $\lambda \in \R$, set $F_{2,\lambda} \coloneqq \ker(L_{22} + \lambda I)$ and $d_\lambda^{(2)} \coloneqq \dim F_{2,\lambda}$.
Since $\rieszmap[\bar\mu]$ is diagonal, $L_{22} = -\rieszmap[\bar\mu_2] A_{22} \rieszmap[\bar\mu_2]^{-1}$, where $A_{22}$ is the $(2,2)$ block of $A_{\rm ms}$. 
Hence, $\sigma(-L_{22}) = \sigma(A_{22})$ is real and discrete.

\begin{proposition}
    \label{prop:invisible_modes_two_species}
    For every $\lambda \in \R$, the spaces $E_\lambda \cap \ker(P_1)$ and $F_{2,\lambda} \cap \ker(L_{12})$ are linearly isomorphic through the map $(0, e_2) \mapsto \rieszmap[\bar\mu_2] e_2$.
\end{proposition}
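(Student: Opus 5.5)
The proof works at the level of the linearized generator. It uses the conjugation $L_{\rm ms} = -\rieszmap[\bar\mu] A_{\rm ms} \rieszmap[\bar\mu]^{-1}$, whose conjugating operator $\rieszmap[\bar\mu]$ is block diagonal.

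\emph{Step 1: transfer of eigenspaces.} Because of the conjugation, $\rieszmap[\bar\mu]$ maps $E_\lambda = \ker(A_{\rm ms}-\lambda I)$ bijectively and linearly onto $\ker(L_{\rm ms}+\lambda I)$. Indeed, $A_{\rm ms} e = \lambda e$ holds if and only if $L_{\rm ms}\rieszmap[\bar\mu] e = -\rieszmap[\bar\mu] A_{\rm ms} e = -\lambda \rieszmap[\bar\mu] e$. Since $\rieszmap[\bar\mu] = \operatorname{diag}(\rieszmap[\bar\mu_1], \rieszmap[\bar\mu_2])$, an element $e=(e_1,e_2)$ satisfies $e_1=0$ if and only if $(\rieszmap[\bar\mu] e)_1 = 0$, because $\rieszmap[\bar\mu_1]$ is injective on $X_1$. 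Hence $\rieszmap[\bar\mu]$ restricts to an isomorphism from $E_\lambda\cap\ker(P_1)$ onto the set of $\nu \in \ker(L_{\rm ms}+\lambda I)$ with $\nu_1=0$. Restricted to these elements, the map is exactly $(0,e_2)\mapsto(0,\rieszmap[\bar\mu_2]e_2)$.

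\emph{Step 2: block decomposition.} For $\nu=(0,\nu_2)$, the eigenvalue equation $L_{\rm ms}\nu = -\lambda\nu$ splits into two rows. The first row reads $L_{12}\nu_2 = 0$. The second row reads $L_{22}\nu_2 = -\lambda\nu_2$, that is, $\nu_2\in F_{2,\lambda}$. Conversely, any $\nu_2 \in F_{2,\lambda}\cap\ker(L_{12})$ gives $(0,\nu_2)\in\ker(L_{\rm ms}+\lambda I)$. Identifying $(0,\nu_2)$ with $\nu_2$, the set from Step 1 is therefore exactly $F_{2,\lambda}\cap\ker(L_{12})$. Composing the two steps gives the claimed isomorphism $(0,e_2)\mapsto\rieszmap[\bar\mu_2]e_2$.

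\emph{Main obstacle: functional setting.} The delicate point is to make sure the operators act on consistent spaces, so that $\ker(L_{22}+\lambda I)$ and the block identity are meaningful. $L_{\rm ms}$ is defined as the conjugate of $A_{\rm ms}$ on $\rieszmap[\bar\mu]D(A_{\rm ms})\subset X_{\rm ms}'$. We need that, for $\nu$ in this domain with $\nu_1=0$, the component $\nu_2$ lies in the domain of $L_{22}$, and that $(L_{\rm ms}\nu)_i = L_{i2}\nu_2$. We will check this using the explicit formula
\[
(L_{\rm ms}\nu)_i = -\rieszmap[\bar\mu_i]\Bigl(\sigma_i \tfrac{\nu_i}{\bar\mu_i} + W_{i1}\ast\nu_1 + W_{i2}\ast\nu_2\Bigr).
\]
With $\nu_1=0$, the first component is $-\rieszmap[\bar\mu_1](W_{12}\ast\nu_2) = L_{12}\nu_2$, which only requires $\nu_2$ to be a distribution; the convolution is smooth since $W_{12}\in C^2$. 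The second component is $L_{22}\nu_2$. We also use the relation $L_{22} = -\rieszmap[\bar\mu_2]A_{22}\rieszmap[\bar\mu_2]^{-1}$ recorded above to identify the domain of $L_{22}$ with $\rieszmap[\bar\mu_2]D(A_{22})$. This reduces to checking that $(0,e_2)\in D(A_{\rm ms})$ implies $e_2\in D(A_{22})$. Both domains are governed by the entropic, diagonal part of $a_{\rm ms}$, whose domain is $[\dotHk[2](\T^d)]^N$, and the cross terms are bounded by Prop.~\ref{prop:multispecies_hessian_bound}. Hence membership in the domain is componentwise, which completes the argument.
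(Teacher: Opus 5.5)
Your proposal is correct and follows the paper's proof. Both arguments transfer eigenvectors through the block-diagonal conjugation $L_{\rm ms} = -\rieszmap[\bar\mu]A_{\rm ms}\rieszmap[\bar\mu]^{-1}$ and read off the two rows of $L_{\rm ms}(0,\nu_2) = -\lambda(0,\nu_2)$. Your domain bookkeeping ($H^{\rm ms}$ is a bounded perturbation of the diagonal entropic form, so $D(A_{\rm ms})$ splits componentwise) is extra care that the paper omits. One small correction there: the convolution is not smooth and is not defined for arbitrary distributions. For $\nu_2 \in X_2'$ and $W_{12}\in C^2$, $W_{12}\ast\nu_2$ is in general only $C^1$, but that is enough to give $W_{12}\ast\nu_2 \in X_1$ and hence $L_{12}\nu_2 \in X_1'$.
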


\begin{proof}
    Fix $\lambda \in \R$, let $e = (0, e_2) \in E_\lambda \cap \ker(P_1)$, and set $\nu \coloneqq \rieszmap[\bar\mu] e$.
    The identity $L_{\rm ms} = -\rieszmap[\bar\mu]A_{\rm ms}\rieszmap[\bar\mu]^{-1}$ gives $L_{\rm ms}(0,\nu_2) = -\lambda(0,\nu_2)$, which is equivalent to $L_{12}\nu_2 = 0$ and $(L_{22} + \lambda I) \nu_2 = 0$.
    Conversely, if $\nu_2 \in F_{2,\lambda} \cap \ker(L_{12})$, then $L_{\rm ms}(0,\nu_2) = (L_{12}\nu_2,L_{22}\nu_2) = -\lambda(0,\nu_2)$.
    Since $\rieszmap[\bar\mu_2] : X_2 \to X_2'$ is an isomorphism, there exists a unique $e_2 \in X_2$ such that $\rieszmap[\bar\mu_2] e_2 = \nu_2$.
    The conjugacy between $L_{\rm ms}$ and $-A_{\rm ms}$ then gives $(0,e_2) \in E_\lambda \cap \ker(P_1)$.
    Hence, $(0,e_2) \mapsto \rieszmap[\bar\mu_2]e_2$ is a linear isomorphism between the two spaces.
\end{proof}

Combining Prop.~\ref{prop:invisible_modes_two_species} with Theorem~\ref{thm:delta_stabilization_one_species}, stabilization through species $1$ at rate $\delta$ is possible if and only if for every $\lambda \in \sigma(-L_{22}) \cap (-\infty, \delta]$, we have $F_{2,\lambda} \cap \ker(L_{12}) = \{0\}$.
Thus, $L_{12}$ must be injective on every eigenspace of the unactuated species whose eigenvalue does not exceed $\delta$, meaning that the cross-interaction $W_{12}$ detects every nonzero eigenfunction in these eigenspaces.
We express this condition more precisely in terms of Fourier coefficients.

For the Fourier characterization below, we complexify the spaces and operators without changing notation, and we write $\widehat{f}(k)$ for the $k$-th Fourier coefficient of $f$.
Choose a basis $(\nu_{\lambda,1}, \ldots, \nu_{\lambda,d_\lambda^{(2)}})$ of $F_{2,\lambda}$ and define, for $c \in \mathbb{C}^{d_\lambda^{(2)}}$,
\[
\mathsf{V}_{12,\lambda}c \coloneqq \left(\widehat{W}_{12}(k) \sum_{\ell=1}^{d_\lambda^{(2)}} c_\ell \widehat\nu_{\lambda,\ell}(k)\right)_{k \in \mathbb{Z}^d \setminus \{0\}}.
\]

\begin{corollary}
    \label{cor:fourier_two_species}
    The spaces $\ker(\mathsf{V}_{12,\lambda})$ and $E_\lambda \cap \ker(P_1)$ are linearly isomorphic for every $\lambda \in \sigma(-L_{22})$.
\end{corollary}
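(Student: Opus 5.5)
By Prop.~\ref{prop:invisible_modes_two_species}, $E_\lambda \cap \ker(P_1)$ is linearly isomorphic to $F_{2,\lambda} \cap \ker(L_{12})$. It therefore suffices to show that $F_{2,\lambda} \cap \ker(L_{12})$ is isomorphic to $\ker(\mathsf{V}_{12,\lambda})$. We do this through the coordinate map $c \mapsto \nu_c \coloneqq \sum_{\ell} c_\ell \nu_{\lambda,\ell}$. Since $(\nu_{\lambda,\ell})_\ell$ is a basis of $F_{2,\lambda}$, this map is a linear isomorphism $\mathbb{C}^{d_\lambda^{(2)}} \to F_{2,\lambda}$. It remains to check that $L_{12}\nu_c = 0$ holds exactly when $\mathsf{V}_{12,\lambda} c = 0$. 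Composing the two isomorphisms then gives the claim. The complexification does not affect the dimension count, since $L_{\rm ms}$ and the kernels are real.

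From the formula for $L_{\rm ms}$, the off-diagonal block is $L_{12}\nu_2 = -\rieszmap[\bar\mu_1](W_{12} \ast \nu_2)$. The first step is to show that $\rieszmap[\bar\mu_1]$ is injective modulo constants. Since $\bar\mu_1 \ge c_1 > 0$ by Prop.~\ref{prop:stationary_states}, $\nabla\cdot(\bar\mu_1\nabla g) = 0$ forces $\int |\nabla g|^2 \bar\mu_1 = 0$, so $g$ is constant. Here $g = W_{12}\ast\nu_2 \in C^2(\T^d)$, because $W_{12}\in C^2$, so the integration by parts is legitimate. If $\nu_2$ is only a distribution in $X_2'$, I would move the derivatives onto $W_{12}$ to get the needed regularity. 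Hence $L_{12}\nu_c = 0$ if and only if $W_{12}\ast\nu_c$ is constant on $\T^d$.

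Next, pass to Fourier series. A function on $\T^d$ is constant exactly when all its nonzero-frequency coefficients vanish, and $\widehat{W_{12}\ast\nu_c}(k) = \widehat W_{12}(k)\,\widehat{\nu_c}(k)$ on the unit-volume torus. By linearity of the Fourier transform, $\widehat{\nu_c}(k) = \sum_\ell c_\ell \widehat\nu_{\lambda,\ell}(k)$. So $W_{12}\ast\nu_c$ is constant if and only if $(\mathsf{V}_{12,\lambda}c)_k = 0$ for all $k \neq 0$, that is, $c \in \ker(\mathsf{V}_{12,\lambda})$.

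The main point needing care is the regularity of the elements of $F_{2,\lambda}$. This is what justifies the convolution identity and the uniqueness of Fourier coefficients, and it also ensures the zero-mean convention is compatible: elements of $X_2'$ pair to zero with constants, so $\widehat\nu(0)=0$. I would argue that $\nu_{\lambda,\ell} = \rieszmap[\bar\mu_2] e$ with $e$ an eigenfunction of $A_{22}$. Elliptic regularity, as in the proof of Prop.~\ref{prop:stationary_states}, then gives enough smoothness for $\nu_{\lambda,\ell}$ to be at least an $H^{-1}$ distribution, and for such distributions the convolution theorem and Fourier uniqueness hold on $\T^d$. This completes the plan.
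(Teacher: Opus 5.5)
Your proposal is correct and follows the paper's proof essentially step for step. You reduce via Prop.~\ref{prop:invisible_modes_two_species} to $F_{2,\lambda}\cap\ker(L_{12})$, use the coordinate isomorphism $c\mapsto\nu_c$, and show that $L_{12}\nu_c=0$ iff $W_{12}\ast\nu_c$ is constant iff $\mathsf{V}_{12,\lambda}c=0$. The differences are minor: you justify explicitly that the kernel of $\rieszmap[\bar\mu_1]$ consists only of constants, which the paper leaves implicit and which also follows from $\rieszmap[\bar\mu_1]:X_1\to X_1'$ being an isomorphism; and you handle the zero mode by noting that constancy is equivalent to vanishing of the nonzero Fourier coefficients, where the paper instead uses a zero-mean argument.
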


\begin{proof}
    Fix $\lambda \in \sigma(-L_{22})$.
    By Prop.~\ref{prop:invisible_modes_two_species}, the spaces $E_\lambda \cap \ker(P_1)$ and $F_{2,\lambda} \cap \ker(L_{12})$ are linearly isomorphic via the map $(0,e_2) \mapsto \rieszmap[\bar\mu_2]e_2$.
    Therefore, it remains to identify $F_{2,\lambda} \cap \ker(L_{12})$ with $\ker(\mathsf{V}_{12,\lambda})$.
    For $c \in \mathbb{C}^{d_\lambda^{(2)}}$, set $\nu_c \coloneqq \sum_{\ell=1}^{d_\lambda^{(2)}} c_\ell \nu_{\lambda,\ell}$.
    Since $(\nu_{\lambda,1}, \ldots, \nu_{\lambda,d_\lambda^{(2)}})$ is a basis of $F_{2,\lambda}$, the map $c \mapsto \nu_c$ is a linear isomorphism from $\mathbb{C}^{d_\lambda^{(2)}}$ onto $F_{2,\lambda}$.

    Now, $L_{12} \nu_c = 0$ if and only if $W_{12} \ast \nu_c$ is constant, since $-\nabla \cdot (\bar\mu_1 \nabla\cdot)$ has constant functions in its kernel.
    Since $\nu_c \in X_2'$ has zero mean, so does $W_{12} \ast \nu_c$, and thus $L_{12}\nu_c = 0$ if and only if $W_{12} \ast \nu_c = 0$, which is equivalent to $[\mathsf{V}_{12,\lambda}c]_k = 0$ for every $k \in \mathbb{Z}^d \setminus \{0\}$.
    Therefore, $c \mapsto \nu_c$ restricts to a linear isomorphism between $\ker(\mathsf{V}_{12,\lambda})$ and $F_{2,\lambda} \cap \ker(L_{12})$.
    Combining this with Prop.~\ref{prop:invisible_modes_two_species} proves the result.
\end{proof}

As a consequence of the previous corollary, 
\[
\dim \left(E_\lambda \cap \ker(P_1)\right) = \dim \ker(\mathsf{V}_{12,\lambda}),
\]
and hence $E_\lambda \cap \ker(P_1) = \{0\}$ if and only if $\operatorname{rank} \mathsf{V}_{12,\lambda} = d_\lambda^{(2)}$.
Thus, for $\lambda \le \delta$, the condition from Theorem~\ref{thm:delta_stabilization_one_species} can be verified directly from the Fourier coefficients of $W_{12}$ and the eigenfunctions of $L_{22}$.

\section{Examples and Numerical Experiments}
\label{sec:numerics}

Given the periodic geometry of the torus, we compute the eigenfunctions of $A_{\rm ms}$ through a Fourier--Galerkin discretization on $\T$ with basis $\{\cos(2\pi k \cdot), \sin(2\pi k \cdot)\}_{k=1}^K$.
The nonlinear dynamics is then projected onto the first $40$ computed eigenfunctions of $A_{\rm ms}$, with the Galerkin matrices and nonlinear terms evaluated by trapezoidal quadrature on uniform grids.
In both experiments, we take $\delta = 5$ and $Q_{\rm ms} = \delta^2 I$.
The implementation adapts~\cite{KaliseMoschenPavliotis2025} to the product-space setting; further details and code are available in the GitHub repository \url{https://github.com/lucasmoschen/feedback-control-gradient-flows}.

\subsection{The two-community noisy Kuramoto model}

Motivated by the two-community noisy Kuramoto model studied in~\cite{Meylahn2020}, we consider two populations with no confinement $V_i = 0$, interaction kernels $W_{ij}(x) = -K_{ij}\cos(2\pi x)$, where $K_{11} = K_{22} = K_s$ and $K_{12} = K_{21} = K_c$, and unit diffusion, $\sigma_i = 1$.

\begin{corollary}
    \label{cor:cosine_visibility_dichotomy}
    Assume $K_c \neq 0$.
    At the stationary state $\bar\mu_1 = \bar\mu_2 = 1$, $\delta_1^\star = 16\pi^2$, whereas $\delta_1^\star = +\infty$ at every stationary state with nonuniform $\bar\mu_2$.
\end{corollary}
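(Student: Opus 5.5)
The plan is to reduce the claim, via Prop.~\ref{prop:invisible_modes_two_species} and Cor.~\ref{cor:fourier_two_species}, to a statement about eigenfunctions of $L_{22}$ that are invisible to the cross kernel. Here $W_{12}(x)=-K_c\cos(2\pi x)$ with $K_c\neq0$, so $\widehat W_{12}(k)\neq 0$ exactly for $k=\pm1$. Hence $E_\lambda\cap\ker(P_1)\neq\{0\}$ if and only if there is a nonzero $\nu\in F_{2,\lambda}$ with $\widehat\nu(\pm1)=0$. So $\delta_1^\star$ is the smallest $\lambda$ for which $-L_{22}$ has an eigenfunction with eigenvalue $\lambda$ and vanishing first Fourier modes.

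For such $\nu$ we also have $W_{22}\ast\nu=0$, since $W_{22}=-K_s\cos(2\pi\cdot)$ only sees the modes $k=\pm1$. The eigen-equation therefore simplifies to
\[
\nu''+(\nu\,U_2')'=-\lambda\nu,
\]
where $U_2=V_2+W_{21}\ast\bar\mu_1+W_{22}\ast\bar\mu_2$. By Prop.~\ref{prop:stationary_states}, $\bar\mu_2\propto e^{-U_2}$. Because every kernel is a first harmonic, $U_2=-R\cos(2\pi(x-x_0))$ for some $R\ge0$, which we normalize to $x_0=0$. Moreover, $\bar\mu_2$ is nonuniform if and only if $R\neq0$.

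\emph{Uniform state.} At $\bar\mu_1=\bar\mu_2=1$, $L_{22}$ is a Fourier multiplier. On the mode $k$ with $|k|\ge2$ it acts as $-4\pi^2k^2$, and on $k=\pm1$ it acts as $-4\pi^2(1-K_s/2)$.
\begin{itemize}
\item For every $\lambda<16\pi^2$, $F_{2,\lambda}$ is either trivial or spanned by the first modes. The space $\ker(\mathsf V_{12,\lambda})$ is then trivial because $\widehat W_{12}(\pm1)\neq0$.
\item For $\lambda=16\pi^2$, $F_{2,\lambda}$ contains $\cos(4\pi x)$ and $\sin(4\pi x)$, which are annihilated by $W_{12}\ast$. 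This remains true if the first-mode eigenvalue accidentally coincides with $16\pi^2$, since $F_{2,\lambda}$ then still contains the pure $k=2$ modes.
\end{itemize}
Hence $\delta_1^\star=16\pi^2$.

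\emph{Nonuniform state ($R\neq0$).} Here we show that no nonzero invisible eigenfunction exists for any $\lambda$. Write the equation in Fourier variables, using $U_2'=2\pi R\sin(2\pi x)$. Matching coefficients gives a three-term recursion of the form
\[
\pi^2 R\,k\,(\widehat\nu_{k+1}-\widehat\nu_{k-1})=c_k(\lambda)\,\widehat\nu_k,\qquad c_k(\lambda)=\lambda-4\pi^2k^2,
\]
up to a fixed nonzero constant that we will verify. The data are $\widehat\nu_0=0$ (zero mean, since $\nu\in X_2'$) and $\widehat\nu_{\pm1}=0$.
\begin{itemize}
\item At $k=1$ the recursion gives $\widehat\nu_2=\widehat\nu_0=0$.
\item Inductively, if $\widehat\nu_{k-1}=\widehat\nu_k=0$ with $k\ge1$, then $\widehat\nu_{k+1}=0$, because $Rk\neq0$.
\item The same argument applies to negative $k$.
\end{itemize}
Thus $\nu=0$, so $F_{2,\lambda}\cap\ker(L_{12})=\{0\}$ for every $\lambda$ and $\delta_1^\star=+\infty$.

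The step I expect to need the most care is deriving this recursion with the correct coefficients. The crucial point is that the coefficient multiplying $\widehat\nu_{k+1}-\widehat\nu_{k-1}$ is proportional to $Rk$, hence nonzero for $k\neq0$. This is what lets the induction start from the three vanishing modes $k=0,\pm1$. I would also double-check that $W_{22}\ast\nu=0$ is correctly dropped, and that it is the first-harmonic structure of $U_2$ that makes the recursion three-term.
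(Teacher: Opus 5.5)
Your proposal is correct and follows essentially the same route as the paper. You reduce via Prop.~\ref{prop:invisible_modes_two_species} and Cor.~\ref{cor:fourier_two_species} to eigenfunctions of $L_{22}$ with $\widehat\nu(\pm1)=0$, read off $\delta_1^\star=16\pi^2$ from the Fourier multiplier at the uniform state, and in the nonuniform case propagate $\widehat\nu(0)=\widehat\nu(\pm1)=0$ through the three-term Fourier recursion. The differences are cosmetic. You normalize the phase so that $U_2'=2\pi R\sin(2\pi x)$, and the constant then works out to $2\pi^2 R k(\widehat\nu_{k+1}-\widehat\nu_{k-1})=(\lambda-4\pi^2k^2)\widehat\nu_k$. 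The paper instead keeps a general phase and uses $\widehat z(-1)=\overline{\widehat z(1)}\neq0$.
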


\begin{proof}
    The eigenvalues of $A_{\rm ms}$ with an eigenfunction $e \in \ker(P_1)$ necessarily belong to $\sigma(-L_{22})$.
    By Cor.~\ref{cor:fourier_two_species}, they are precisely those $\lambda \in \sigma(-L_{22})$ for which $\mathsf{V}_{12,\lambda}$ is not injective.
    Hence, $\delta_1^\star$ is the minimum of such eigenvalues.
    
    Assume first that $\bar\mu_1 = \bar\mu_2 = 1$.
    For $k \in \mathbb{Z} \setminus \{0\}$, set $\zeta_k(x) \coloneqq e^{2\pi \mathrm{i} k x}$.
    Since $L_{22}y = \Delta y + 4 \pi^2 K_s \cos(2\pi\cdot) \ast y$, we obtain $-L_{22}\zeta_k = a_k \zeta_k$ with $a_k \coloneqq 4 \pi^2 k^2 - 2 \pi^2 K_s \mathbf{1}_{\{|k|=1\}}$.
    Because $(\zeta_k)_{k \neq 0}$ is a basis of the complexified space $X_2'$, $\sigma(-L_{22}) = \{a_k : k \neq 0\}$ and $F_{2,\lambda} = \operatorname{span}\{\zeta_k : a_k = \lambda\}$, with basis $(\zeta_\ell)_{a_\ell = \lambda}$.
    Thus,
    \[
    \mathsf{V}_{12,\lambda}c = -\frac{K_c}{2} \left(\mathbf{1}_{\{|k|=1\}} \sum_{a_\ell = \lambda} c_\ell \ind_{\{k=\ell\}} \right)_{k \in \mathbb{Z} \setminus \{0\}}.
    \]
    Fix $\lambda \in \sigma(-L_{22})$ and suppose first that $\lambda \neq 4 \pi^2 n^2$ for all $n \ge 2$. 
    Then $a_\ell = \lambda$ if and only if $|\ell| = 1$, and therefore $F_{2,\lambda} = \operatorname{span}\{\zeta_1, \zeta_{-1}\}$.
    Hence, $\mathsf{V}_{12,\lambda}c = 0$ implies $c_1 = c_{-1} = 0$, and $\mathsf{V}_{12,\lambda}$ is injective.
    Suppose now that $\lambda = 4 \pi^2 n^2$ for some $n \ge 2$.
    Since $a_n = \lambda$, we have $\zeta_{\pm n} \in F_{2,\lambda}$ and hence $\mathsf{V}_{12,\lambda}$ is not injective.
    In conclusion, $\mathsf{V}_{12,\lambda}$ only fails to be injective for $\lambda \in \{4 \pi^2 n^2 : n \ge 2\}$, yielding $\delta_1^\star = 16\pi^2$.

    Now suppose that $\bar\mu_2$ is nonuniform.
    Fix $\lambda \in \sigma(-L_{22})$, take $c \in \ker(\mathsf{V}_{12,\lambda})$, and write $\nu \coloneqq \sum_{\ell=1}^{d_\lambda^{(2)}} c_\ell \nu_{\lambda,\ell}$.
    Since $\widehat{W}_{12}(k) \neq 0$ if and only if $|k| = 1$, the inclusion $c \in \ker(\mathsf{V}_{12,\lambda})$ implies $\widehat\nu(\pm 1) = 0$.
    It follows that $W_{22} \ast \nu = 0$.
    Moreover, $\widehat\nu(0) = 0$ because $\nu \in X_2'$.
    Now, set $z \coloneqq (\log\bar\mu_2)' = -W_{12}' \ast \bar\mu_1 - W_{22}' \ast \bar\mu_2$, so that
    \[
    \widehat{z}(k) = -2\pi \widehat{\sin(2\pi \cdot)}(k) \left(K_c \widehat{\bar\mu}_1(k) + K_s \widehat{\bar\mu}_2(k)\right),
    \]
    and hence $\widehat{z}(k) = 0$ for all $k \neq \pm 1$.
    Since $\bar\mu_2$ is nonuniform, $z \not \equiv 0$, and because $z$ is real-valued, $\widehat{z}(-1) = \overline{\widehat{z}(1)}$. 
    Therefore $\widehat{z}(\pm 1) \neq 0$.
    Since $L_{22}\nu = -\lambda\nu$ and $W_{22} \ast \nu = 0$, we obtain $-\nu'' + (z\nu)' = \lambda\nu$.
    Taking Fourier coefficients gives
    \[
    (4\pi^2 k^2 - \lambda) \widehat\nu(k) + 2\pi\mathrm{i} k (\widehat{z}(1) \widehat\nu(k-1) + \widehat{z}(-1) \widehat\nu(k+1)) = 0
    \]
    for all $k \in \mathbb{Z}$.
    We know that $\widehat\nu(0) = \widehat\nu(\pm 1) = 0$.
    Assume inductively that $\widehat\nu(k) = 0$ for every $k \in \{-n, \ldots, n\}$, where $n \ge 1$.
    Evaluating the identity at $k = n$ and $k = -n$ yields $2\pi \mathrm{i} n \widehat{z}(-1) \widehat\nu(n+1) = 0$ and $-2\pi \mathrm{i} n \widehat{z}(1) \widehat\nu(-n-1) = 0$. 
    Since $\widehat{z}(\pm 1) \neq 0$, it follows that $\widehat\nu(n+1) = \widehat\nu(-n-1) = 0$.
    By induction, $\widehat\nu(k) = 0$ for all $k \in \mathbb{Z}$.
    Thus, $\nu = 0$, and the linear independence of $\nu_{\lambda, 1}, \ldots, \nu_{\lambda, d_\lambda^{(2)}}$ gives $c = 0$. 
    Therefore, $\mathsf{V}_{12,\lambda}$ is injective for every $\lambda \in \sigma(-L_{22})$, and hence $\delta_1^\star = +\infty$.
\end{proof}

For this experiment, the target equilibrium is the uniform distribution $\bar\mu_i \equiv 1$, which is unstable for the chosen value $K_s = 3$; see~\cite[Thm.~1.3]{CarrilloSalmaniw2025}.
For the positive couplings considered here, the attractive cosine interactions drive the uncontrolled dynamics toward synchronization.
For $K_c = 2$, both feedback strategies raise the lower spectral bound from approximately $-60$ to $20$.
Accordingly, the controlled nonlinear trajectories decay toward the homogeneous equilibrium.
We quantify synchronization using the order parameters
\[
r_j(t) = \left|\int_\T e^{2\pi \mathrm{i} x} \mu_j(t,x) \, dx\right|, \quad j = 1,2.
\]
As shown in Fig.~\ref{fig:kuramoto_control}, both feedback strategies suppress synchronization for $K_c = 2$ at essentially the same rate.
Since $\delta = 5 < \delta_1^\star = 16\pi^2$, Cor.~\ref{cor:cosine_visibility_dichotomy} guarantees stabilizability whenever $K_c \neq 0$. 
Panel~(c) highlights the role of the cross-interaction: when $K_c = 0$, species $2$ synchronizes, whereas for $K_c = 1, 2$ its order parameter decays, with faster decay under stronger coupling.

\begin{figure}[t]
    \centering
    \includegraphics[width=\linewidth]{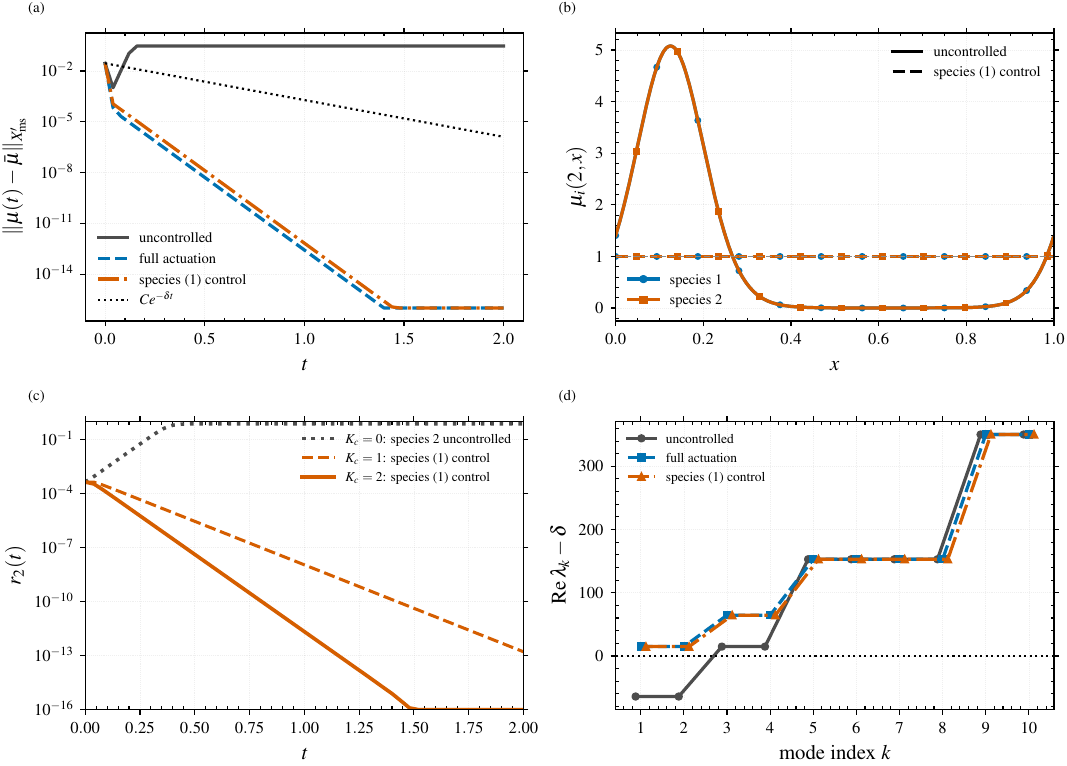}
    \caption{\textbf{Stabilization of the two-population Kuramoto system.} 
    The parameters are $K_s = 3$, $\delta = 5$, and initial densities having two peaks, centered at $0.25$ and $0.75$ for species $1$, and at $0$ and $0.5$ for species $2$.
    Panels (a), (b), and (d) use $K_c = 2$, whereas panel (c) considers $K_c \in \{0,1,2\}$.
    Panel (a): distance to equilibrium in $X_{\rm ms}'$-norm.
    Panel (b): final densities. 
    Panel (c): order parameter $r_2(t)$ of the unactuated species under single-species control. 
    Panel (d): first ten shifted eigenvalues.}
    \label{fig:kuramoto_control}
\end{figure}

\subsection{Cell-sorting of two species}

Our second experiment is based on a movement model for the spatial organization of interacting species, studied in~\cite{GiuntaHillenLewisPotts2024}, in which each species has nonlocal information about the spatial distribution of the other species.
In our notation, we consider the rescaled case $V_i = 0$, $\sigma_i = 1$, $W_{11} = W_{22} = 0$, and $W_{12} = W_{21} = \gamma K$, where $K(y) = \frac{1}{2}\mathbf{1}_{\{|y|_{\T} \le 1/4\}}$.
Here, $\gamma \in \R$ is the cross-interaction coefficient, with $\gamma > 0$ corresponding to mutual avoidance and $\gamma < 0$ to mutual attraction.
To meet our regularity assumptions, we replace $K$ by the periodic heat regularization $K_\tau = e^{2\pi^2 \tau^2}G_\tau \ast K$, where $\widehat{G}_\tau(k) = e^{-2 \pi^2 \tau^2 k^2}$. 

The homogeneous state $\bar\mu = (1,1)$ is locally asymptotically stable for $|\gamma| < \gamma_c$ and unstable for $|\gamma| > \gamma_c$~\cite[Thms.~2.9--2.10]{CarrilloSalmaniw2025}, where $\gamma_c = 2\pi$ is the critical magnitude.
At $\gamma = \gamma_c$, a supercritical branch of nonuniform equilibria bifurcates into the region $\gamma > \gamma_c$.
For $\gamma > \gamma_c$ sufficiently close to $\gamma_c$, the equilibria on this branch have components out of phase and are locally asymptotically stable in the invariant subspace of even densities.
Such profiles represent spatial sorting, also called segregation, since regions where one species dominates have less of the other~\cite{CarrilloSalmaniw2025}.

\begin{corollary}
    \label{cor:cell_sorting_visibility_dichotomy}
    Assume $\gamma \neq 0$. 
    Then $\delta_1^\star = 16\pi^2$ at the stationary state $\bar\mu_1 = \bar\mu_2 = 1$, whereas $\delta_1^\star = +\infty$ at every stationary state with nonuniform $\bar\mu_2$.
\end{corollary}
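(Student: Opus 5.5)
The proof mirrors that of Cor.~\ref{cor:cosine_visibility_dichotomy}. By Cor.~\ref{cor:fourier_two_species}, $\delta_1^\star$ is the minimum of those $\lambda \in \sigma(-L_{22})$ for which $\mathsf{V}_{12,\lambda}$ fails to be injective. The key input is the Fourier structure of the cross-kernel. A direct computation gives
\[
\widehat{K}(k) = \frac{1}{2}\int_{-1/4}^{1/4} e^{-2\pi\mathrm{i}kx}\,dx = \frac{\sin(\pi k/2)}{2\pi k}, \qquad k \neq 0,
\]
and $\widehat{K}_\tau(k) = e^{2\pi^2\tau^2(1-k^2)}\widehat{K}(k)$. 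Hence, for $\gamma \neq 0$, $\widehat{W}_{12}(k) \neq 0$ exactly when $k$ is odd, and $\widehat{W}_{12}(k) = 0$ for every even $k \neq 0$. Since $W_{22} = 0$, $\sigma_2 = 1$ and $V_2 = 0$, the unactuated block reduces to $L_{22}\nu = (\nu' - z\nu)'$ with $z \coloneqq (\log\bar\mu_2)' = -W_{12}' \ast \bar\mu_1$.

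\emph{Uniform state.} Here $z = 0$ and $-L_{22}\zeta_k = 4\pi^2k^2\zeta_k$. Thus $\sigma(-L_{22}) = \{4\pi^2n^2 : n \ge 1\}$ and $F_{2,4\pi^2n^2} = \operatorname{span}\{\zeta_n,\zeta_{-n}\}$. The operator $\mathsf{V}_{12,\lambda}$ multiplies the coefficients $c_{\pm n}$ by $\widehat{W}_{12}(\pm n)$, so it is injective if and only if $n$ is odd. The smallest non-injective eigenvalue is therefore attained at $n = 2$, which gives $\delta_1^\star = 16\pi^2$.

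\emph{Nonuniform $\bar\mu_2$.} Fix $\lambda \in \sigma(-L_{22})$ and $c \in \ker(\mathsf{V}_{12,\lambda})$, and let $\nu$ be the corresponding eigenfunction. Elliptic regularity makes $\nu$ a smooth function satisfying $-\nu'' + (z\nu)' = \lambda\nu$. The inclusion $c \in \ker(\mathsf{V}_{12,\lambda})$ forces $\widehat\nu(k) = 0$ for all odd $k$, and also $\widehat\nu(0) = 0$, so $\nu$ has only even Fourier modes. On the other side, $\widehat z(k) = -2\pi\mathrm{i}k\,\widehat{W}_{12}(k)\widehat{\bar\mu}_1(k)$ vanishes for every even $k$, so $z$ has only odd modes.

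Consequently $z\nu$ has only odd modes, while $\nu''$ and $\lambda\nu$ have only even modes. Projecting the eigen-equation onto odd frequencies then gives $(z\nu)' = 0$ in the odd modes, hence in all modes, so $z\nu$ is constant. Because $z\nu$ has no zero mode, $z\nu \equiv 0$. The equation thus reduces to $-\nu'' = \lambda\nu$, so $\nu$ is a trigonometric polynomial, namely a combination of $\zeta_{\pm n}$ with $4\pi^2n^2 = \lambda$.

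Since $\bar\mu_2$ is nonuniform and $z \in C^1$, we have $z \not\equiv 0$, so $z \neq 0$ on some open interval. There $\nu = 0$, and real-analyticity of $\nu$ yields $\nu \equiv 0$. Linear independence of the basis then gives $c = 0$. Hence $\mathsf{V}_{12,\lambda}$ is injective for every $\lambda$, and $\delta_1^\star = +\infty$.

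The main obstacle is this nonuniform case. Unlike the Kuramoto case, $\widehat z$ may have infinitely many nonzero modes, so the induction on $|k|$ used there is unavailable. I would instead rely on the odd/even parity splitting described above, together with unique continuation for trigonometric polynomials. I would check carefully that the parity separation is exact, namely that $z\nu$ really lies entirely in the odd modes and that no constant term survives in $z\nu$.
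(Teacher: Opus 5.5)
Your proposal is correct and follows essentially the same route as the paper's proof. Both arguments compute $\widehat{K}_\tau(k)$ to show $\widehat{W}_{12}(k)\neq 0$ exactly for odd $k$, then analyze $\mathsf{V}_{12,4\pi^2n^2}$ diagonally at the uniform state, and in the nonuniform case split the eigen-equation into even and odd Fourier modes to get $-\nu''=\lambda\nu$ and $z\nu=0$, finishing by unique continuation from an interval where $z\neq 0$ (your real-analyticity argument is a slightly more explicit form of the paper's step ``since $-\nu''=\lambda\nu$, $\nu$ vanishes identically'').
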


\begin{proof}
    We compute, for $k \in \mathbb{Z} \setminus \{0\}$,
    \[
    \widehat{K}_\tau(k) = \frac{e^{-2\pi^2\tau^2(k^2-1)}}{2\pi k} \sin\left(\frac{\pi k}{2}\right),
    \]
    so $\widehat{W}_{12}(k) \neq 0$ if and only if $k$ is odd.
    
    First assume that $\bar\mu_1 = \bar\mu_2 = 1$.
    The Fourier basis functions $\zeta_k(x) \coloneqq e^{2\pi\mathrm{i}k x}$ satisfy $-L_{22} \zeta_k = 4 \pi^2 k^2 \zeta_k$.
    Therefore, $\sigma(-L_{22}) = \{4 \pi^2 n^2\}_{n \ge 1}$, $F_{2, 4\pi^2n^2} = \operatorname{span}\{\zeta_{-n}, \zeta_n\}$,
    and 
    \[
    \mathsf{V}_{12, 4\pi^2 n^2} c = \left(\widehat{W}_{12}(k)(c_1 \ind_{\{k=-n\}} + c_2 \ind_{\{k=n\}})\right)_{k \in \mathbb{Z} \setminus \{0\}}.
    \]
    Hence, $\mathsf{V}_{12,4\pi^2n^2}$ is injective if and only if $\widehat{W}_{12}(\pm n) \neq 0$, that is, if and only if $n$ is odd.
    As in the proof of Cor.~\ref{cor:cosine_visibility_dichotomy}, $\delta_1^\star$ is the smallest eigenvalue at which injectivity fails, which is the one with $n = 2$, hence $\delta_1^\star = 16\pi^2$.
    
    Suppose now that $\bar\mu_2$ is nonuniform.
    Fix $\lambda \in \sigma(-L_{22})$, take $c \in \ker(\mathsf{V}_{12,\lambda})$, and set $\nu \coloneqq \sum_{j=1}^{d_\lambda^{(2)}} c_j \nu_{\lambda,j}$.
    Since $\widehat{W}_{12}(k) \neq 0$ for every odd integer $k$, the definition of $\mathsf{V}_{12,\lambda}$ gives $\widehat\nu(k) = 0$ for every such $k$.
    Moreover, $\widehat\nu(0) = 0$ because $\nu \in X_2'$.
    The identity $L_{22}\nu = -\lambda\nu$ becomes $-\nu'' + (z\nu)' = \lambda\nu$, where $z \coloneqq (\log\bar\mu_2)' = -\gamma K_\tau' \ast \bar\mu_1$ satisfies $\widehat{z}(k) = 0$ for every even integer $k$.
    Additionally, $\nu''$ satisfies $\widehat{\nu''}(k) = -4 \pi^2 k^2 \widehat{\nu}(k) = 0$ for every odd integer $k$, whereas $\widehat{(z \nu)'}(k) = 0$ if $k$ is an even integer.
    Separating the even and odd Fourier modes gives $-\nu'' = \lambda \nu$ and $(z\nu)' = 0$.
    The second identity implies that $z \nu$ is constant with $\widehat{z\nu}(0) = 0$, which implies $z \nu = 0$.
    The nonuniformity of $\bar\mu_2$ implies $z \not \equiv 0$.
    By continuity, $z$ does not vanish on some open interval, and hence $z\nu = 0$ implies that $\nu = 0$ there.
    Since $-\nu'' = \lambda\nu$, it follows that $\nu = 0$ identically.
    Consequently, $c = 0$, and $\mathsf{V}_{12,\lambda}$ is injective for every $\lambda \in \sigma(-L_{22})$. 
    Hence, $\delta_1^\star = +\infty$.
\end{proof}

As discussed above, for every $\gamma > \gamma_c$ sufficiently close to $\gamma_c$, there exists a nonuniform equilibrium $\bar\mu$ whose two components are out of phase.
We select the even representative among the translations of $\bar\mu$.
Although this equilibrium is locally asymptotically stable in this class, its spectral gap is small near $\gamma_c$, and convergence of the uncontrolled dynamics can therefore be slow.
We therefore use feedback control to accelerate convergence to $\bar\mu$.
Since $\bar\mu_2$ is nonuniform, Cor.~\ref{cor:cell_sorting_visibility_dichotomy} gives $\delta_1^\star = +\infty$. 
Hence, for every $\delta > 0$, exponential stabilization is achievable by acting only on species $1$, provided that $m \ge m_{1,\delta}$ and the control profiles are chosen from the corresponding open dense set.

For the numerical experiments, we fix $\tau = 0.04$ and $\gamma = (1 + \varepsilon) \gamma_c$, where $\varepsilon \in \{0.005, 0.01\}$.
For each value of $\varepsilon$, we compute $\bar\mu$ using a relaxed fixed-point algorithm.
After quotienting out the translational symmetry, the spectral gap is given by the first nonzero eigenvalue, yielding $0.3952$ for $\varepsilon = 0.005$ and $0.7914$ for $\varepsilon = 0.01$.
Spectral quantities are reported for the formulation in $\T$, whereas the figures and the times below use the original variables of~\cite{GiuntaHillenLewisPotts2024}, in which lengths are multiplied by $4$ and times by $16$.
To monitor pattern formation, let $c(t) = \mu_1(t) - \mu_2(t)$ and $\bar{c} = \bar\mu_1 - \bar\mu_2$, and define $S(t) \coloneqq \langle c(t), \bar{c} \rangle_{L^2} / \|\bar{c}\|_{L^2}^2$.
Thus, we have $S = 0$ at the uniform density and $S = 1$ at the selected stationary state. 
At $t = 300$, the uncontrolled coefficient is only $0.375$ for $\varepsilon = 0.005$, compared with $0.998$ for $\varepsilon = 0.01$.
By contrast, as shown in Fig.~\ref{fig:cell_sorting_control}, both control strategies reach $S(t) \ge 0.99$ very quickly, after which the control becomes negligible.

\begin{figure}[t]
    \centering
    \includegraphics[width=\linewidth]{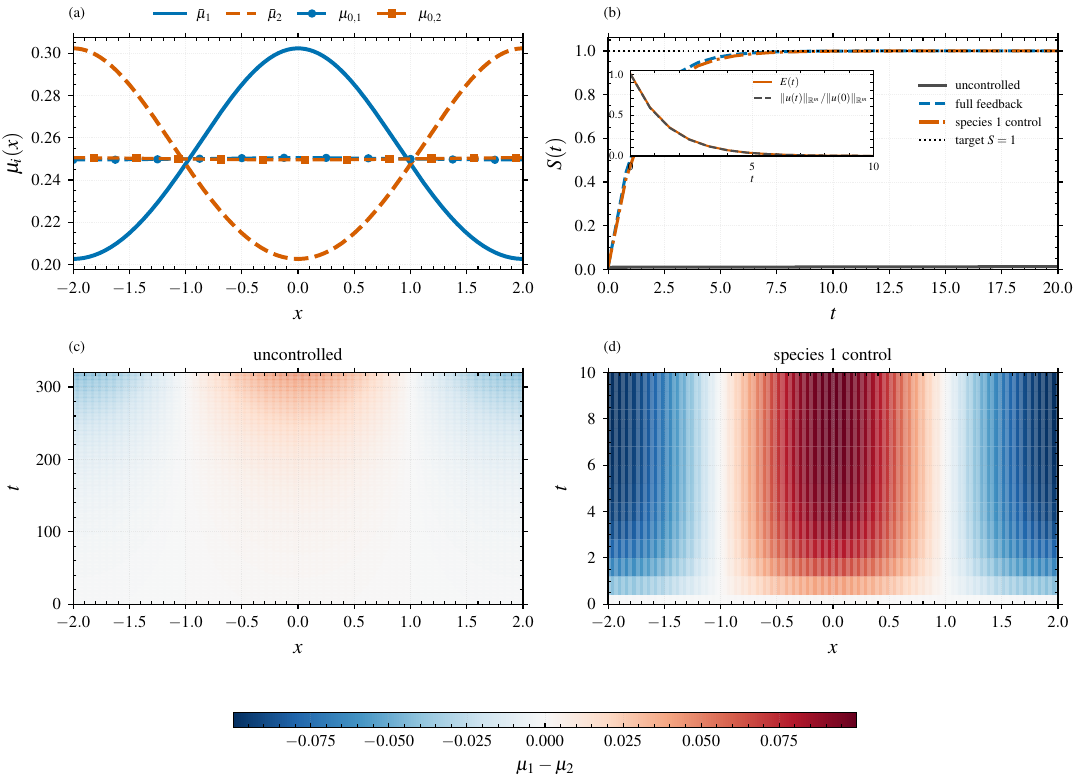}
    \caption{\textbf{Feedback control accelerates cell-sorting dynamics.}
    Results for $\varepsilon = 0.005$ and initial density $\mu_0 = (1 - \eta) \bar\mu + \eta(1, 1)$ with $\eta = 0.99$.
    The spatial panels are shown on $[-2,2]$, the scaling of~\cite{GiuntaHillenLewisPotts2024}.
    Panel (a): $\mu_0$ and $\bar\mu$.
    Panel (b): coefficient $S(t)$.
    The inset shows $E(t) = \|\mu(t) - \bar\mu\|_{X_{\rm ms}'} / \|\mu_0 - \bar\mu\|_{X_{\rm ms}'}$ and the normalized control norm $\|u(t)\|_{\R^m}/\|u(0)\|_{\R^m}$.
    Panels (c) and (d): space-time contrast $\mu_1 - \mu_2$.}
    \label{fig:cell_sorting_control}
\end{figure}

\section{Conclusion}
\label{sec:conclusion}

We extended the stabilizing feedback construction based on the Wasserstein Hessian developed in \cite{KaliseMoschenPavliotis2026} to multi-species gradient flows, obtaining local stabilization of stationary states at a prescribed exponential rate.
The analysis shows that controlling a single species can locally stabilize the dynamics when the interactions transmit the control effect to all relevant modes.
Moreover, we established that the shape control functions can be chosen from an open dense set, suggesting a trade-off between computing Hessian eigenfunctions to design structured controls and using more generic profiles at the cost of solving a less structured Riccati equation.
This trade-off deserves further investigation, as does applying the proposed methodology to biologically inspired problems, where external stimulation can suppress or promote synchronization.

\bibliographystyle{IEEEtran}
\bibliography{biblio}

\end{document}